\theoremstyle{plain}
\newtheorem{theorem}{Theorem}[section]
\newaliascnt{corollary}{theorem}
\newaliascnt{lemma}{theorem}
\newtheorem{lemma}[lemma]{Lemma}
\newaliascnt{proposition}{theorem}
\newtheorem{proposition}[proposition]{Proposition}
\newaliascnt{hypotheses}{theorem}
\theoremstyle{definition}
\newaliascnt{definition}{theorem}
\newtheorem{definition}[definition]{Definition}
\newaliascnt{example}{theorem}
\newaliascnt{remark}{theorem}
\newtheorem{remark}[remark]{Remark}
\newaliascnt{remarks}{theorem}
\newtheorem{remarks}[remarks]{Remarks}
\newcommand\shtitle{Superconnections and period domains}
\newcommand\shortauthor{Luis E. Garcia}
\numberwithin{equation}{section}
\begin{document}
\title[Title]{Superconnections, theta series, and period domains}
\author{Luis E. Garcia}
\maketitle
\begin{abstract}
We use superconnections to define and study some natural differential forms on period domains $\mathbb{D}$ that parametrize polarized Hodge structures of given type on a rational quadratic vector space $V$. These forms depend on a choice of vectors $v_1,\ldots,v_r \in V$ and have a Gaussian shape that peaks on the locus where $v_1,\ldots,v_r$ become Hodge classes. We show that they can be rescaled so that one can form theta series by summing over a lattice $L^r \subset V^r$. These series define differential forms on arithmetic quotients $\Gamma \backslash \mathbb{D}$. We compute their cohomology class explicitly in terms of the cohomology classes of Hodge loci in $\Gamma \backslash \mathbb{D}$. When the period domain is a hermitian symmetric domain of type IV, we show that the components of our forms of appropriate degree recover the forms introduced by Kudla and Millson. In particular, our results provide another way to establish the main properties of these forms.
\end{abstract}
\setcounter{tocdepth}{1}
\tableofcontents

\section{Introduction}

The goal of this paper is to introduce certain natural theta series that define closed differential forms on arithmetic quotients of period domains. We will show that the cohomology classes of these theta series are Siegel modular forms, and will determine their Fourier expansion in terms of Hodge loci. When the period domain is a Hermitian symmetric domain, we recover the main results of Kudla and Millson \cite{KudlaMillson1} concerning modularity of generating series of special cycles in the cohomology of orthogonal Shimura varieties. Our main tool will be Quillen's Chern form attached to a superconnection. It will allow us to define differential forms on period domains that generalize the forms introduced by Kudla and Millson. We now describe our results in more detail.

\subsection{Natural differential forms on period domains} Let $\mathbb{D}$ be the period domain parametrizing Hodge structures of even weight $w$ with given Hodge numbers on a fixed $\mathbb{Q}$-vector space $V_\mathbb{Q}$, polarized by a bilinear form $Q$. For any $v \in V_\mathbb{Q}$ there is a complex submanifold $\mathbb{D}_v \subset \mathbb{D}$ whose points correspond to Hodge structures where $v$ is a Hodge class. There is a Hodge bundle $\mathcal{F}=\mathcal{F}^{w/2+1}$ on $\mathbb{D}$ and a holomorphic section $s_v$ of $\mathcal{F}^\vee$ such that the Koszul complex $(\wedge \mathcal{F},s_v)$ gives a resolution of $\mathcal{O}_{\mathbb{D}_v}$ for non-zero $v$.

Using the Hodge metric on $\mathcal{F}$, we construct a superconnection $\nabla_v$ on this complex and define a closed differential form
\begin{equation}
\varphi(v) = e^{-\pi Q(v,v)} tr_s(e^{\nabla_v^2}) \in A^{2*}(\mathbb{D}).
\end{equation}
Let $G=O(V_\mathbb{R},Q)$. We show that, with respect to the natural action of $G$ on $\mathbb{D}$, the form $\varphi(\cdot)$ is $G$-invariant; that is, it satisfies 
$g^*\varphi(gv)=\varphi(v)$ for every $g \in G$. Let $\mathcal{S}(V_\mathbb{R})$ be the Schwartz space of smooth and rapidly decreasing functions on $V_\mathbb{R}$. Denote by $\omega=\omega_\psi$ the Weil representation, with respect to the additive character $\psi(x)=e^{2\pi ix}$, of the metaplectic double cover of $SL_2(\mathbb{R})$ on $\mathcal{S}(V_\mathbb{R})$.

\begin{theorem} \label{thm:thm_1}
For fixed $z \in \mathbb{D}$, the form $\varphi$ belongs to $\mathcal{S}(V_\mathbb{R}) \otimes \wedge T^*\mathbb{D}_z$. Up to explicit exact $\mathcal{S}(V_\mathbb{R})$-valued $G$-invariant forms, $\varphi$ is a lowest weight vector of weight $\tfrac{1}{2}\dim V$.
\end{theorem}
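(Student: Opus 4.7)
The plan is to first establish the Schwartz property in $v$ by exhibiting $\varphi(v)$ as a polynomial in $v$ times a positive-definite Gaussian, and then identify $\varphi$ as a lowest weight vector for the Weil representation by analyzing the action of the $\mathfrak{sl}_2$-triple $(H,R,L)$ on this explicit expression.

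To begin, I would unpack the supertrace. Since $s_v$ depends linearly on $v$ and the operators $s_v$, $s_v^*$ on $\wedge\mathcal{F}$ each square to zero, a direct computation of $\nabla_v^2$ isolates a scalar piece proportional to $\|s_v\|_z^2$ (with the sign dictated by the superconnection conventions and the Hodge metric on $\mathcal{F}^\vee$), plus higher form-degree terms that are polynomial in $v$. Taking the supertrace of the exponential therefore yields $tr_s(e^{\nabla_v^2}) = e^{-\|s_v\|_z^2} P(v,z)$ for a polynomial $P$ in $v$ with values in $\wedge T^*\mathbb{D}_z$, and hence
\begin{equation}
\varphi(v) = e^{-\pi Q(v,v) - \|s_v\|_z^2}\, P(v,z).
\end{equation}
The Schwartz property then reduces to verifying that $\pi Q(v,v) + \|s_v\|_z^2$ is positive definite on $V_\mathbb{R}$, which follows from a short computation using the Hodge decomposition: the combined form is, up to a constant, the Hodge majorant at $z$.

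For the weight and lowering claims, I would compute the action of the Cartan element $H$ and of the lowering operator $L$ of $\mathfrak{sl}_2$ on this explicit expression. The Gaussian piece contributes weight $\tfrac{1}{2}\dim V$, and the polynomial corrections from $P(v,z)$ should cancel when tracked against the $\mathbb{Z}$-grading of the Koszul complex $\wedge\mathcal{F}$, yielding the claimed weight. For $L\varphi$, I would invoke Quillen's transgression formula: introducing the rescaled superconnection $\mathbb{A}_t = \nabla + \sqrt{t}(s_v + s_v^*)$ and the family $\varphi_t(v) = e^{-\pi t Q(v,v)} tr_s(e^{\mathbb{A}_t^2})$, the family $\varphi_t$ is the orbit of $\varphi_1 = \varphi$ under the diagonal torus of $SL_2(\mathbb{R})$ (up to explicit normalization), and Quillen's formula gives $\tfrac{d}{dt}\varphi_t = d\eta_t$ for an explicit Chern--Simons-type primitive $\eta_t$ built from the superconnection. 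Via the Cayley transform, $L$ can be expressed in terms of $H$ and the torus derivative, so the two computations combine to give $L\varphi = d\eta$ for an explicit $G$-invariant Schwartz-valued primitive $\eta$.

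The main technical obstacle will be bookkeeping of conventions: reconciling the $2\pi i$ normalizations in Quillen's Chern character with those of the Weil representation (fixed by $\psi(x) = e^{2\pi ix}$), and ensuring that the identification of $L\varphi$ with the transgression derivative produces a manifestly $G$-invariant primitive. The $G$-invariance at each stage should follow from the equivariance of the Chern connection and of the Hodge metric on $\mathcal{F}$ with respect to the action of $G$ on $\mathbb{D}$.
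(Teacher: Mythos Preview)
Your Schwartz argument is correct and in fact more direct than the paper's: since the degree-zero piece of $\nabla_v^2$ is the scalar $-2\pi h_z(s_v)\cdot\mathrm{id}$ and the remainder has positive form degree (hence is nilpotent), one does get $\varphi(v)=e^{-2\pi q_z(v)}P(v,z)$ with $P$ polynomial in $v$ and $q_z(v)=\tfrac{1}{2}Q(v,v)+h_z(s_v)$ positive definite. The paper reaches the same conclusion via Duhamel's formula, which also yields uniform estimates on compacta needed later, but for the pointwise Schwartz statement your route is fine.

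The weight and lowering steps, however, have genuine gaps.

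For the weight, your factorization is the wrong one for this purpose. The positive-definite Gaussian $e^{-2\pi q_z(v)}$ is an eigenvector of the compact Cartan of weight $\tfrac{1}{2}(s-t)$, not $\tfrac{1}{2}m=\tfrac{1}{2}(s+t)$; the missing weight $t$ must come from $P$, and there is no ``cancellation against the $\mathbb{Z}$-grading'' that produces it. What actually works is the paper's factorization $\varphi=e^{-\pi Q(v,v)}\varphi^0$: the \emph{indefinite} Gaussian $e^{-\pi Q(v,v)}$ (not itself Schwartz) has weight exactly $m/2$, and one then shows by the single transgression formula that $\omega(X)\varphi^0$ contributes only $d$-exact terms. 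The mechanism is transgression, not cancellation.

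For the lowering operator, your claim that Quillen's formula gives $\tfrac{d}{dt}\varphi_t=d\eta_t$ is false as stated: transgression applies to $tr_s(e^{\mathbb{A}_t^2})=\varphi^0(\sqrt{t}\,v)$, but your $\varphi_t$ carries the extra factor $e^{-\pi tQ(v,v)}$, whose $t$-derivative contributes the non-exact term $-\pi Q(v,v)\varphi_t$. Relatedly, $X_-$ cannot be expressed in terms of the compact Cartan $H$ and the split Cartan $h$ alone (these span only a two-dimensional subspace of $\mathfrak{sl}_2$). The paper's fix is to write $y^{-m/4}\omega(g_\tau)\varphi(v)=e^{2\pi iQ(v)\tau}\varphi^0(y^{1/2}v)$: the first factor is \emph{holomorphic} in $\tau$, so $\tfrac{d}{d\bar\tau}$ sees only $\varphi^0(y^{1/2}v)$, and then (double) transgression gives the exact primitive. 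Your sketch is missing exactly this separation of the holomorphic prefactor.
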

We obtain similar results for the forms $\varphi(v_1,\ldots,v_r)=\varphi(v_1) \wedge \cdots \wedge \varphi(v_r)$.

\subsection{Theta series and arithmetic quotients of period domains} Let $L \subset V_\mathbb{Q}$ be an even integral lattice with dual lattice $L^\vee \supset L$ and let
\begin{equation}
\Gamma_L=\{\gamma \in G^0 | \gamma(L)=L, \ \gamma|_{L^\vee/L}=id\}.
\end{equation}
Then $\Gamma_L$ is a discrete subgroup of $G^0$. Fix a connected component $\mathbb{D}^+$ of $\mathbb{D}$ and define
\begin{equation}
X_L= \Gamma_L \backslash \mathbb{D}^+.
\end{equation}
The complex manifolds $X_L$ were introduced by Griffiths and Schmid \cite{GriffithsSchmid} and have recently been studied by many authors; see, for example, \cite{GreenGriffithsKerrBook} for a study of their geometric and arithmetic properties. For general Hodge numbers, they are not algebraic, and in fact not even K\"ahler. There is a natural collection of analytic cycles $Hdg(n,L)$ indexed by positive integers $n$: the points in the support of $Hdg(n,L)$ correspond to $\Gamma_L$-orbits of Hodge structures on $V_\mathbb{Q}$ containing a Hodge class $v \in L$ with $Q(v,v)=2n$.

Let $\mathbb{H}$ be the complex upper half plane. Since $\varphi$ is rapidly decreasing, the theta series
\begin{equation}
\theta(\tau;L) = y^{-\dim V/4} \sum_{v \in L} \omega(g_\tau)\varphi(v) 
\end{equation}
converges for any $\tau=x+iy \in \mathbb{H}$ and defines a closed differential form on $X_L$. 

\begin{theorem} \label{thm:thm_intro_1_2}
Let $[\theta(\tau;L)]$ be the cohomology class of $\theta(\tau;L)$. Then $[\theta(\tau;L)]$ is a holomorphic modular form of weight $\tfrac{1}{2}\dim V$, valued in $H^*(X_L)$. Its Fourier expansion is given by
\begin{equation} \label{eq:thm_1.2}
\left(\frac{i}{2\pi} \right)^{rk(\mathcal{F})} \cdot [\theta(\tau;L)] \cup Td(\mathcal{F}^\vee) = c^{top}(\mathcal{F}^\vee) + \sum_{n \geq 1} Hdg(n,L) q^n, \quad q = e^{2\pi i \tau},
\end{equation}
where $Td(\cdot)$ is the Todd class and $c^{top}(\cdot)$ denotes the Chern class of top degree.
\end{theorem}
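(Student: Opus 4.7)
The proof naturally splits into three pieces: the modular transformation of the sum, the holomorphy of the cohomology class, and the explicit identification of Fourier coefficients with Hodge-loci cycles. The first two are formal consequences of Theorem \ref{thm:thm_1}. Since $\varphi$ is Schwartz in $v$, the standard theta machine (Poisson summation for the Weil representation) shows that $\sum_{v \in L}\omega(g_\tau)\varphi(v)$ converges absolutely and transforms as an automorphic form of weight $\dim V/2$ under the metaplectic cover of the congruence subgroup of $SL_2(\mathbb{Z})$ stabilizing $L^\vee/L$; the prefactor $y^{-\dim V/4}$ converts from automorphic to classical normalization. For holomorphy, the lowering operator $L^- \in \mathfrak{sl}_2$ acts via $\omega(g_\tau)$ as a multiple of $\partial/\partial\bar\tau$ on automorphic coefficients, and Theorem \ref{thm:thm_1} asserts that $\omega(L^-)\varphi$ is an exact $G$-invariant form on $\mathbb{D}$; this exactness survives the summation over $L$ and the descent to $X_L$ (the Schwartz property guaranteeing uniform convergence of a primitive), so $\partial[\theta(\tau;L)]/\partial\bar\tau$ vanishes in $H^*(X_L)$.

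The substance of the theorem is the Fourier expansion. I would extract the $q^n$-coefficient by integrating against $e^{-2\pi i n x}$ over $x \in [0,1]$: since $\omega(n(x))$ acts on $\varphi(v)$ by the scalar $e^{\pi i x Q(v,v)}$, this operation isolates the partial sum over $v \in L$ with $Q(v,v) = 2n$. The geometric identification then rests on Quillen's theorem: $tr_s(e^{\nabla_v^2})$ represents the Chern character of the Koszul complex $(\wedge^\bullet \mathcal{F}, \iota_{s_v})$, which for $v \neq 0$ resolves $\mathcal{O}_{\mathbb{D}_v}$, giving $[tr_s(e^{\nabla_v^2})] = ch(\mathcal{O}_{\mathbb{D}_v})$. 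The splitting-principle identity $\prod_j(1-e^{-x_j}) \cdot \prod_j x_j/(1-e^{-x_j}) = \prod_j x_j$ then yields
\[
ch(\mathcal{O}_{\mathbb{D}_v}) \cdot Td(\mathcal{F}^\vee) = c^{top}(\mathcal{F}^\vee) = [\mathbb{D}_v],
\]
the second equality expressing the Euler class of $\mathcal{F}^\vee$ as the Poincaré dual of the transverse zero locus of $s_v$. Summing over $v$ with $Q(v,v) = 2n$ and descending to $X_L$ recovers $Hdg(n,L)$; the overall factor $(i/2\pi)^{rk(\mathcal{F})}$ reconciles the Chern-Weil representative $tr_s(e^{\nabla^2})$ with the topological Chern character.

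For the constant term ($n = 0$), the vector $v = 0$ has vanishing Koszul differential, so $tr_s(e^{\nabla_0^2}) = \sum_i(-1)^i ch(\wedge^i \mathcal{F}) = c^{top}(\mathcal{F}^\vee) \cdot Td(\mathcal{F}^\vee)^{-1}$ by the splitting principle, giving $c^{top}(\mathcal{F}^\vee)$ after multiplication by $Td(\mathcal{F}^\vee)$. Nonzero isotropic $v \in L$ should contribute zero cohomologically: Hodge-Riemann positivity on the middle Hodge piece forces $\mathbb{D}_v = \emptyset$ for such $v$, so $s_v$ is nowhere vanishing, the Koszul complex is acyclic, and its Chern character is zero. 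The main obstacle will lie in this third step: carefully tracking the $(i/2\pi)^{rk(\mathcal{F})}$ factor between the differential-form and topological Chern-character normalizations, verifying that the exact-form corrections provided by Theorem \ref{thm:thm_1} remain exact (hence cohomologically trivial) after summation over $L$ and descent to $X_L$, and dispatching the isotropic contributions to the constant term.
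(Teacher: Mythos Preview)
Your treatment of modularity and holomorphy is correct and matches the paper: Poisson summation plus Lemma~\ref{lemma:weight} gives the transformation law, and Lemma~\ref{lemma:lowering_varphi} shows the lowering operator lands in exact forms, so the class is annihilated by $\partial/\partial\bar\tau$.

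For the Fourier expansion you take a different route from the paper, and there is a real gap. Your argument is K-theoretic: on a space where the section $s_v$ is defined you invoke $[\,tr_s(e^{\nabla_v^2})\,]=ch(\wedge^\bullet\mathcal F)=c^{top}(\mathcal F^\vee)\cdot Td(\mathcal F^\vee)^{-1}$ together with the Gauss--Bonnet identification $c^{top}(\mathcal F^\vee)=[\mathbb D_v]$. These identities are correct, but only on $\mathbb D$ or on $\Gamma_{L,v}\backslash\mathbb D^+$, since $s_v$ is not $\Gamma_L$-invariant and does not descend to $X_L$. The Fourier coefficient, after unfolding, is the integral
\[
\int_{\Gamma_{L,v}\backslash\mathbb D^+}\varphi^0(y^{1/2}v)\wedge\eta
\]
with $\eta$ pulled back from a compactly supported form on $X_L$. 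Because the covering $\Gamma_{L,v}\backslash\mathbb D^+\to X_L$ has infinite degree, the pullback of $\eta$ is \emph{not} compactly supported, so knowing that $\varphi^0(v)$ and (a representative of) $[\mathbb D_v]\cdot Td^{-1}$ are cohomologous on $\Gamma_{L,v}\backslash\mathbb D^+$ does not by itself give equality of integrals: you would need a primitive with enough decay for Stokes to apply, and you have not produced one. The same issue bites your treatment of nonzero isotropic $v$: the Koszul complex is acyclic, so the class vanishes on $\Gamma_{L,v}\backslash\mathbb D^+$, but that does not immediately force the unfolded integral to vanish.

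The paper closes this gap analytically (Propositions~\ref{prop:Thom_property_on_D} and~\ref{prop:Thom_property}): one first uses the transgression formula and the estimate~\eqref{eq:varphi_estimate} to show the unfolded integral is independent of the scaling parameter $t$, then lets $t\to\infty$ and applies Bismut's localization theorem (on the compact dual, via a partition of unity) to identify the limit with the integral over $\Gamma_{L,v}\backslash\mathbb D_v^+$. Your K-theoretic picture is the right heuristic for why the answer is what it is, but to make it a proof you must either reproduce this Thom-form argument or supply an alternative justification for passing from the cohomological identity on the cover to the numerical identity after pairing with $\eta$.
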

In particular, the right hand side of \eqref{eq:thm_1.2} is a holomorphic modular form of weight $\dim V/2$ valued in $H^{2 rk(\mathcal{F})}(X_L)$. (We remark that this consequence of the theorem can also be proved directly from the modularity of the generating series of special cycles in the symmetric space attached to $G$ proved in \cite{KudlaMillson3}.) 

More generally, for any positive integer $r$ and any symmetric positive semidefinite matrix $T$ with integral entries, we consider the locus $Hdg(T,L)$ in $X_L$ consisting of $\Gamma_L$-orbits of Hodge structures on $V_\mathbb{Q}$ containing Hodge classes $v_1,\ldots,v_r \in L$ such that $Q(v_i,v_j)=2T$. For $\tau$ in the Siegel half plane of genus $r$, we can define a theta series $\theta(\tau;L^r)$. We prove similarly that its cohomology class $[\theta(\tau;L^r)]$ is a $H^*(X_L)$-valued holomorphic Siegel modular form of weight $\tfrac{1}{2}\dim V$, with Fourier expansion
\begin{equation} \label{thm:thm_1_intro}
\left(\frac{i}{2\pi} \right)^{r \cdot rk(\mathcal{F})} \cdot[\theta(\tau;L^r)] \cup Td(\mathcal{F}^\vee)^r = \sum_{T \geq 0} Hdg(T,L) \cup c^{top}(\mathcal{F}^\vee)^{r-rk(T)} q^T.
\end{equation}

\subsection{Relation with Kudla-Millson forms} Suppose that the period domain $\mathbb{D}$ classifies polarized Hodge structures of weight $2$ and type $(1,n,1)$. In this case, $\mathbb{D}$ is the hermitian symmetric domain associated with the Lie group $SO(n,2)$. For a positive integer $r$, Kudla and Millson define forms
\begin{equation}
\varphi_{KM} \in [\mathcal{S}(V_\mathbb{R}^r) \otimes A^{r,r}(\mathbb{D})]^G
\end{equation}
and study their properties. These forms (defined in \cite{KudlaMillson1} for more general symmetric spaces) have proved to be of fundamental importance in the study of special cycles; see, for example, \cite{KudlaOrthogonal,KudlaMillson3} or the recent work of Bergeron, Millson and Moeglin, \cite{BergeronMillsonMoeglin}, proving the Hodge conjecture for compact arithmetic quotients of the complex $n$-ball in certain cohomological degrees. The original motivation for this paper was to understand the construction of the forms $\varphi_{KM}$. In Theorem~\ref{thm:comparison_varphi_varphi_KM} we show that
\begin{equation}
\varphi_{KM}(v_1,\ldots,v_r)= \left( \frac{i}{2\pi} \right)^r \varphi(v_1,\ldots,v_r)[2r], \quad v_1,\ldots,v_r \in V_\mathbb{R}.
\end{equation}
Thus we obtain another approach to proving (for the group $G \cong O(n,2)$) the main properties of $\varphi_{KM}$ and the associated theta series established in \cite{KudlaMillson1,KudlaMillson2,KudlaMillson3}. Perhaps more importantly, our results show that the  $\varphi_{KM}$ are characteristic forms.

\subsection{Notation} \begin{itemize}
\item $A^{p,q}(X)$ : complex-valued $(p,q)$-forms on a complex manifold $X$.
\item $\mathcal{F}^\vee$ : dual bundle of a vector bundle $\mathcal{F}$.
\item $\alpha[k]$ : component in $A^k(X)$ of a differential form $\alpha \in A^*(X)$.
\end{itemize}

\subsection{Acknowledgements}
The author is grateful to Stephen Kudla for many helpful conversations and suggestions. This paper has also benefitted from discussions with Daniel Disegni, Daniel Le, Kartik Prasanna and Siddarth Sankaran; the author thanks all of them.

\section{Superconnections and differential forms on period domains}
In this section, we first recall some basic facts about period domains and Hodge bundles (\textsection\ref{subsection:PHS_and_period_domains}-\textsection\ref{subsection:definition_s_v}). Then we construct the superconnections $\nabla_v$ and the forms $\varphi(\cdot)$ (\textsection \ref{subsection:superconnection_nabla_v_def}) and prove some basic properties of $\varphi(\cdot)$ (\textsection \ref{subsection:main_properties_varphi_v}).

Throughout this paper we fix the following:
\begin{itemize}
\item[]$V$ : a $\mathbb{Q}$-vector space of finite dimension $m >0$;
\item[]$w$ : a positive even integer;
\item[]$Q: V \times V \to \mathbb{Q}$ : a non-degenerate, indefinite symmetric bilinear form on $V$, with signature $(s,t)$.
\end{itemize}
We define ${\bf G}=Aut(V,Q)$ to be the orthogonal group of $(V,Q)$ and set
\begin{equation}
G = {\bf G}(\mathbb{R}).
\end{equation}
Thus $G$ is an orthogonal group isomorphic to $O(s,t)$. We denote the connected component of the identity in $G$ by $G^0$.

\subsection{Polarized Hodge structures and period domains} \label{subsection:PHS_and_period_domains} We consider $\mathbb{Q}$-Hodge structures 
\begin{equation}
h: Res_{\mathbb{C}/\mathbb{R}} \mathbb{G}_m \to GL(V_\mathbb{R})
\end{equation}
of weight $w$ on $V_\mathbb{Q}$ that are polarized by $Q$. Writing $(F^p)_{0 \leq p \leq w}$ for the Hodge filtration of $h$, this means that the Riemann bilinear relations
\begin{equation} \label{eq:Riemann_bilinear}
\begin{split}
Q(F^p,F^{w-p+1}) &= 0, \\
Q(Cv, \overline{v}) & > 0 \text{ for } v \text{ non-zero in } V_\mathbb{C},
\end{split}
\end{equation}
hold, where $C=h(i)$. We write $h^{p,q}=\dim_\mathbb{C} V^{p,q}$
for the Hodge numbers of $h$ and call ${\bf h}=(h^{w,0},h^{w-1,1},\ldots,h^{0,w})$ the type of $h$. We only consider types with $h^{w/2,w/2} \neq 0$. The polarized Hodge structures $(V,Q,h)$ of a given type ${\bf h}$ can be identified with the points of a complex manifold $\mathbb{D}_{\bf h}$. Namely, the first bilinear relation defines a closed subvariety
\begin{equation}
\mathbb{D}_{\bf h}^\vee := \{ (F^p)_{0 \leq p \leq w} | Q(F^p,F^{w-p+1})=0 \}
\end{equation}
of the variety of flags in $V_\mathbb{C}$ of type ${\bf h}$. The second Riemannian bilinear relation defines an open subset $\mathbb{D}_{\bf h} \subset \mathbb{D}^\vee_{\bf h}$; thus $\mathbb{D}_{\bf h}$ is the period domain of type ${\bf h}$ and $\mathbb{D}_{\bf h}^\vee$ is its compact dual. If ${\bf h}$ is fixed, we omit it from the notation and simply write $\mathbb{D}$ and $\mathbb{D}^\vee$.

\subsection{Hodge bundles and Hodge loci} \label{subsection:Hodge_bundles_and_Hodge_loci} We write $\mathcal{V}$ for the trivial holomorphic vector bundle on $\mathbb{D}^\vee$ with fiber $V_\mathbb{C}$, and identify it with its sheaf of sections $V_\mathbb{C} \otimes \mathcal{O}_{\mathbb{D}^\vee}$. Through the inclusion $V_\mathbb{R} \subset V_\mathbb{C} \otimes \mathcal{O}_{\mathbb{D}^\vee}$ we can view $v \in V_\mathbb{R}$ as a section of $\mathcal{V}$. For every $p$ with $0 \leq p \leq w$, there is a holomorphic subbundle $\mathcal{F}^p \subset \mathcal{V}$ whose fiber over a point $z=(F^p)_{0 \leq p \leq w}$ is $F^p \subset V_\mathbb{C}$. We denote by the same symbols $\mathcal{F}^p$ and $\mathcal{V}$ the restrictions of these bundles to $\mathbb{D}$. The $\mathcal{F}^p$ are known as Hodge bundles, and on $\mathbb{D}$ they carry a natural hermitian metric $(\cdot,\cdot)_{\mathcal{F}^p}$ defined by
\begin{equation}
(v_z,v'_z)_{\mathcal{F}^p} = 2Q(C v_z,\overline{v'_z}).
\end{equation}
For any $v \in V_\mathbb{R}$, define the Hodge locus
\begin{equation}
\mathbb{D}_v := \{z \in \mathbb{D} | v \in \mathcal{F}_z^{w/2}\}.
\end{equation}
Note that $\mathbb{D}_v$ is non-empty only if $Q(v,v)>0$ or $v=0$. When $v \in V_\mathbb{Q}$, the bilinear relations \eqref{eq:Riemann_bilinear} show that $\mathbb{D}_v$ agrees with the locus of $z \in \mathbb{D}$ where $v$ is a Hodge class.

\subsection{Action of $G$} \label{subsection:Action_of_G} The assignment $(g,\varphi) \mapsto g \circ \varphi$ defines an action of $G$ on $\mathbb{D}$ by holomorphic transformations. This action is transitive and the stabilizer of a point in $\mathbb{D}$ is a compact subgroup of $G$ isomorphic to
\begin{equation}
O(h^{w/2,w/2}) \times \prod_{p>w/2} U(h^{p,w-p}).
\end{equation}
In particular, $\mathbb{D}$ has two connected components; we fix one and denote it by $\mathbb{D}^+$. The Hodge bundles $\mathcal{F}^p$ and their hermitian metrics $(\cdot,\cdot)_{\mathcal{F}^p}$ are naturally $G$-equivariant. 

\subsection{Definition of the section $s_v$} \label{subsection:definition_s_v} Set 
\begin{equation}
\mathcal{F}=\mathcal{F}^{w/2+1}.
\end{equation}
The bilinear form $Q$ gives an identification $\mathcal{V} \cong \mathcal{V}^\vee$. Through it, an element $v \in V_\mathbb{R}$ defines a global section $s_v$ of $\mathcal{F}^\vee$; more concretely, $s_v$ is defined by
\begin{equation} \label{eq:def_s_v}
s_v(v'_z)=Q(v'_z,v).
\end{equation}
There is a unique hermitian metric $(\cdot,\cdot)_{\mathcal{F}^\vee}$ on $\mathcal{F}^\vee$ such that the isomorphism $\overline{\mathcal{F}} \cong \mathcal{F}^\vee$ induced by $(\cdot,\cdot)_{\mathcal{F}}$ is an isometry. For a section $s_z$ of $\mathcal{F}^\vee$, define
\begin{equation}
h_z(s)=(s_z,s_z)_{\mathcal{F}^\vee}.
\end{equation}
Writing $v=\overline{v_z} + v'_z$ with $v_z \in \mathcal{F}_z$ and $v'_z \in (\mathcal{F}_z)^\perp=\mathcal{F}_z^{w/2}$, we have
\begin{equation}
h_z(s_v) = 2Q(C v_z,\overline{v_z}).
\end{equation}
By \eqref{eq:Riemann_bilinear}, the zero set of the section $s_v$ is $\mathbb{D}_v$; this shows that each $\mathbb{D}_v$ is an analytic subset of $\mathbb{D}$, with analytic structure given by the exact sequence
\begin{equation} \label{eq:Koszul_p_coker}
\mathcal{F} \xrightarrow{s_v} \mathcal{O}_\mathbb{D} \to \mathcal{O}_{\mathbb{D}_v} \to 0.
\end{equation}
Assume that $Q(v,v)>0$ and let $G_v$ be the stabilizer of $v$ in $G$. Then $G_v$ is isomorphic to $O(s-1,t)$ and $\mathbb{D}_v$ is a homogeneous complex manifold under $G_v$. The stabilizers of this action are isomorphic to $O(h^{w/2,w/2}-1) \times \Pi_{p>w/2} U(h^{p,w-p})$. We conclude that the complex codimension of $\mathbb{D}_v$ in $\mathbb{D}$ is
\begin{equation}
codim_{\mathbb{D}} \ \mathbb{D}_v = rk(\mathcal{F}^\vee) = h^{w/2+1,w/2-1}+\ldots +h^{w,0}.
\end{equation}
In particular, $s_v$ is a regular section of $\mathcal{F}^\vee$.

\subsection{Superconnections and differential forms on $\mathbb{D}$} \label{subsection:superconnection_nabla_v_def} Define $K(v)$ to be the Koszul complex associated with $s_v$ (see \autoref{appendix}); it carries a hermitian metric induced from the metric on $\mathcal{F}$, with corresponding Chern connection $\nabla$. We regard $K(v)$ as a super vector bundle with even part $\wedge^{even}\mathcal{F}$ and odd part $\wedge^{odd} \mathcal{F}$ and denote by $s_v^*$ the adjoint to $s_v \in End(\wedge \mathcal{F})^{odd}$.
\begin{definition}
Let $\nabla_v$ be the superconnection on $K(v)$ given by
\begin{equation}
\nabla_v=\nabla + i \cdot \sqrt{2\pi} ( s_v + s_v^* )
\end{equation}
and define
\begin{equation}
\varphi^0(v)=tr_s(e^{\nabla_v^2}) \in \oplus_{p \geq 0} A^{p,p}(\mathbb{D}).
\end{equation}

More generally, given vectors $v_1,\ldots,v_r \in V_\mathbb{R}$, we write $K(v_1,\ldots,v_r)$ for the Koszul complex associated with $(s_{v_1},\ldots,s_{v_r}) : \mathcal{F}^{\oplus r} \to \mathcal{O}_{\mathbb{D}}$, $\nabla_{v_1,\ldots,v_r}$ for the corresponding superconnection and
\begin{equation}
\varphi^0(v_1,\ldots,v_r)=tr_s(e^{\nabla_{v_1,\ldots,v_r}^2}) \in \oplus_{p \geq 0} A^{p,p}(\mathbb{D}).
\end{equation}
\end{definition}
%
As we will see below, for fixed $v$ with $Q(v,v)>0$ (so that $\mathbb{D}_v \neq \emptyset$), the form $\varphi^0(v)$ decreases very rapidly as we move away from $\mathbb{D}_v$. However, as a function of $v$ it is not rapidly decreasing: for example, when restricted to $\mathbb{D}_v$, the form $\varphi^0(tv)$ is independent of $t>0$. This makes it impossible to define interesting theta series as sums of forms $\varphi^0(v)$ where $v$ varies in a lattice inside $V_\mathbb{Q}$ since the sum will be divergent over the locus of Hodge classes. Fortunately, one can rescale $\varphi^0(v)$ to obtain a form $\varphi(v)$ with better growth properties as a function of $v$, as follows. 

\begin{definition} For $v_1,\ldots,v_r \in V_\mathbb{R}$, define
\begin{equation*}
\varphi(v_1,\ldots,v_r) = e^{-\pi \sum Q(v_i,v_i)} \varphi^0(v_1,\ldots,v_r) \in \oplus_{p \geq 0} A^{p,p}(\mathbb{D}).
\end{equation*}
\end{definition}

We will show that $\varphi(v)$ decreases rapidly with $v$ in \autoref{subsection:upper_bound_Duhamel}. Note that by \eqref{eq:app_tensor_Chern} and \eqref{eq:app_tensor_Koszul}, we have
\begin{equation} \label{eq:varphi_wedge}
\varphi(v_1,\ldots,v_r)=\varphi(v_1) \wedge \ldots \wedge \varphi(v_r).
\end{equation}

\subsection{Basic properties of the forms $\varphi$} \label{subsection:main_properties_varphi_v} We now list some basic properties of the forms $\varphi(v_1,\ldots,v_r)$. The proofs are straightforward consequences of the properties of the Chern form outlined in \autoref{appendix}. Given a vector bundle with connection $(E,\nabla)$, we write 
\begin{equation}
c^{top}(E,\nabla)=\det(\nabla^2)
\end{equation}
for its Chern-Weil form of top degree and
\begin{equation}
Td(E,\nabla)=\det \left( \frac{\nabla^2}{1-e^{-\nabla^2}} \right)
\end{equation}
for its Todd form.

\begin{proposition} \label{prop:phi_main_properties} Let $r$ be a positive integer and $v_1,\ldots,v_r \in V_\mathbb{R}$.
\begin{enumerate}[label=(\alph*)]
\item $\varphi(v_1,\ldots,v_r)$ is closed.
\item For every $g \in G$, we have $g^*\varphi(gv_1,\ldots, gv_r) = \varphi(v_1,\ldots,v_r)$.
\item $\varphi(v_1,\ldots,v_r)[2k]=0$ if $k < r$.
\item $\varphi(0) = c^{top}(\mathcal{F}^\vee,\nabla) \wedge Td(\mathcal{F}^\vee,\nabla)^{-1}$.
\item If $h \in SO(r)$, then $\varphi((v_1,\ldots,v_r) \cdot h) = \varphi(v_1,\ldots,v_r)$.
\end{enumerate}
\end{proposition}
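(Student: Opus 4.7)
The plan is to derive each statement from formal properties of Quillen's Chern form of a superconnection, as collected in the appendix, combined with the explicit formula $\nabla_v = \nabla + i\sqrt{2\pi}(s_v+s_v^*)$ and its multi-vector analogue.

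For (a) and (b) I would proceed uniformly. Closedness of $\mathrm{tr}_s(e^{\nabla^2})$ is the Bianchi identity for superconnections, and multiplying by the scalar $e^{-\pi \sum Q(v_i,v_i)}$ preserves closedness since that factor is constant on $\mathbb{D}$; this gives (a). For (b), the Hodge bundles, their hermitian metrics, and their Chern connections are $G$-equivariant by \textsection\ref{subsection:Action_of_G}. From \eqref{eq:def_s_v} and the $G$-invariance of $Q$, the bundle map $g:\mathcal{F}|_z \to \mathcal{F}|_{gz}$ identifies $s_v$ with the pullback of $s_{gv}$; naturality of the Chern form of a superconnection then yields the asserted identity, and the Gaussian factor is manifestly $G$-invariant.

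For (c), the wedge formula \eqref{eq:varphi_wedge} reduces the claim to showing $\varphi(v)[0] = 0$ for a single $v$. The exterior-degree-zero component of $\nabla_v^2$ is the fibrewise self-adjoint endomorphism $-2\pi(s_v s_v^* + s_v^* s_v)$ of $\wedge \mathcal{F}$, so the desired quantity is the fibrewise supertrace of its exponential. The finite-dimensional McKean--Singer identity identifies this with the super-dimension of the cohomology of the pointwise Koszul complex $(\wedge \mathcal{F}_z, s_v(z))$. This vanishes in both cases: where $s_v(z) \neq 0$ the complex is exact (Koszul complex of a nonzero linear form), and where $s_v(z) = 0$ the differential is trivial and the super-dimension reduces to $\sum_k(-1)^k \binom{\mathrm{rk}\,\mathcal{F}}{k} = 0$. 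For (d) I would specialize to $v = 0$: then $\nabla_0 = \nabla$ and $\varphi(0) = \mathrm{tr}_s(e^{\nabla^2})$, the super-Chern character of $\wedge \mathcal{F}$. By the splitting principle, if $\lambda_1,\ldots,\lambda_n$ are the Chern roots of $\mathcal{F}$ this equals $\prod_i(1 - e^{\lambda_i})$; factoring out $-\lambda_i$ from each term and comparing with the definitions of \textsection\ref{subsection:main_properties_varphi_v} yields $c^{top}(\mathcal{F}^\vee,\nabla) \wedge Td(\mathcal{F}^\vee,\nabla)^{-1}$.

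Finally, (e) will follow from the observation that the only data entering $\nabla_{v_1,\ldots,v_r}$ is the bundle map $\sigma = (s_{v_1},\ldots,s_{v_r}): \mathcal{F}^{\oplus r} \to \mathcal{O}_\mathbb{D}$ and its adjoint: right-multiplying $(v_1,\ldots,v_r)$ by $h \in SO(r)$ precomposes $\sigma$ with the orthogonal bundle automorphism of $\mathcal{F}^{\oplus r}$ acting on the second factor, which preserves the hermitian structure and hence conjugates the Koszul superconnection by a unitary bundle automorphism; the supertraced Chern form is invariant under such conjugation, and the Gaussian factor is unchanged because $\sum Q(v_i,v_i)$ is. I expect the main obstacle to lie in (c): one must justify the vanishing of the pointwise supertrace not only where the Koszul complex is exact but also on the Hodge locus $\mathbb{D}_v$ itself, where $s_v(z) = 0$ yet $v \neq 0$; once this case is handled, the remaining items are direct bookkeeping with Chern--Weil forms.
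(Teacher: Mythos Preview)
Your arguments for (a), (b), (d), (e) are correct and coincide with the paper's. For (c) you also reach the right conclusion, but by a longer route than necessary: you invoke finite-dimensional McKean--Singer and a case split on whether $s_v(z)$ vanishes. The paper instead uses the Clifford-type identity $s_v s_v^* + s_v^* s_v = h_z(s_v)\cdot \mathrm{id}$ on $\wedge \mathcal{F}_z$, so that $\nabla_v^2[0]$ is a \emph{scalar} endomorphism and $\mathrm{tr}_s(e^{\nabla_v^2[0]}) = e^{-2\pi h_z(s_v)}\,\mathrm{tr}_s(\mathrm{id}) = 0$ since $\mathrm{rk}(\mathcal{F}) \geq 1$. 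This handles all points $z$ uniformly, so the ``main obstacle'' you flag on the Hodge locus simply does not arise; your two cases are in fact computing the same thing ($\mathrm{sdim}\,\wedge\mathcal{F}_z = 0$) with extra machinery.
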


\begin{proof}
Property $(a)$ holds for general Chern character forms. For any $v \in V_\mathbb{R}$, the $G$-equivariant structure on the Hodge bundles induces an isomorphism of complexes $(g^{-1})^*K(v) \cong K(gv)$ preserving the connection $\nabla$, and this proves $(b)$. For any $z \in \mathbb{D}$, we have 
\begin{equation}
\nabla_v^2[0](z)=-2\pi h_z(s_v) \cdot id \in End(\wedge \mathcal{F}_z),
\end{equation}
hence $\varphi(v)[0]=0$ and $(c)$ follows from \eqref{eq:varphi_wedge}. We have
\begin{equation}
\varphi(0)=ch(\wedge \mathcal{F},\nabla)=\det \left(1-e^{\nabla_\mathcal{F}^2} \right),
\end{equation}
and hence $(d)$ holds. To prove $(e)$, note that $h$ induces an isometry
\begin{equation}
i(h) : K(v_1,\ldots,v_r) \cong K((v_1,\ldots,v_r)\cdot h)
\end{equation}
(see \eqref{eq:Koszul_SO_isom}) such that 
\begin{equation}
\nabla_{(v_1,\ldots,v_r)\cdot h}=i(h)^{-1}\nabla_{v_1,\ldots,v_r} i(h)
\end{equation}
and the result follows since $tr_s$ is invariant under conjugation.
 \end{proof}

Finally, we note that the form $\varphi(v_1,\ldots,v_r)^*$ is invariant under complex conjugation (see \autoref{appendix} for the definition of the operator $^*$). 

\section{An example: the Hermitian symmetric domain attached to $SO(n,2)$} \label{section:example_SO_n_2}

In this section, we focus on a special case where $\mathbb{D}$ is the hermitian symmetric domain associated with $SO(n,2)$. After describing the Hodge bundle $\mathcal{L}$, its Hodge metric (\textsection\ref{subsection:period_domain_herm_sym_case}-\textsection\ref{subsection:taut_bundle_Hodge_metric_and_connection}) and the superconnection $\nabla_v$ (\textsection \ref{subsection:superconnection_def_herm_sym_case}), we compute explicitly the degree $2$ component of $\varphi(v)$ in \textsection \ref{subsection:explicit_comp_deg_2}. We use this to compare $\varphi(v)$ with the Kudla-Millson forms $\varphi_{KM}$ in \textsection \ref{subsec:computation_upper_half_plane}-\textsection \ref{subsection:comparison_varphi_varphi_KM}, where we prove \autoref{thm:comparison_varphi_varphi_KM}.

\subsection{Period domains for polarized Hodge structures of type $(1,n,1)$} \label{subsection:period_domain_herm_sym_case} We now consider the special case where the the bilinear form $Q$ has signature $(n,2)$ with $n \geq 1$. 
Define
\begin{equation}
\begin{split}
\mathcal{P} &=\{ v \in V_\mathbb{C} | Q(v,v)=0, \ Q(v,\overline{v}) <0 \},\\
\mathbb{D} &= \mathcal{P}/\mathbb{C}^\times.
\end{split}
\end{equation}
Since a Hodge structure of weight $2$ and type $(1,n,1)$ on $V$ polarized by $Q$ is determined by the line $F^2 \subset V_\mathbb{C}$, the complex manifold $\mathbb{D}$ is the period domain for polarized Hodge structures on $(V,Q)$ of this type.

%
The group $G$ acts transitively on $\mathbb{D}$ leaving the complex structure invariant, and the quotient map $\mathcal{P} \to \mathbb{D}$ realizes $\mathcal{P}$ as a $G$-equivariant holomorphic principal bundle with fiber $\mathbb{C}^\times$. We write $\mathcal{L} \to \mathbb{D}$ for the associated complex line bundle (so that $\mathcal{L}-\{0\} \cong \mathcal{P}$), known as the tautological bundle. The natural map $\mathbb{D} \to \mathbb{P}(V(\mathbb{C}))$ embeds $\mathbb{D}$ as an open subset of the quadric
\begin{equation}
\{ v \in \mathbb{P}(V(\mathbb{C})) | Q(v,v)=0\} \subset \mathbb{P}(V(\mathbb{C}))
\end{equation} 
and identifies $\mathcal{L}$ with the pullback of $\mathcal{O}_{\mathbb{P}(V(\mathbb{C}))}(-1)$. The linear functional $Q(\cdot,v)$ on $V(\mathbb{C})$ attached to $v \in V_\mathbb{R}$ defines a global section of $\mathcal{O}_{\mathbb{P}(V(\mathbb{C}))}(1)$ and hence by restriction a global section of $\mathcal{L}^\vee$ that we denote by $s_v$. Note that $\mathcal{L} \cong \mathcal{F}^2$ as $G$-equivariant line bundles and $s_v$ is as defined in \autoref{subsection:definition_s_v}.

\subsection{Hodge metric and Chern connection on $\mathcal{L}$} \label{subsection:taut_bundle_Hodge_metric_and_connection} The bilinear form on $V(\mathbb{C})$ induces a hermitian metric $h_\mathcal{L}$ on $\mathcal{L}$: for $z=[v] \in \mathbb{D}$, we set
\begin{equation}
h_{\mathcal{L}_z}(v)= -2Q(v,\overline{v}).
\end{equation}
There is a unique hermitian metric on $\mathcal{L}^\vee$ making the isomorphism $\overline{\mathcal{L}} \cong \mathcal{L}^\vee$ induced by $h_\mathcal{L}$ an isometry; we denote this metric by $h$. Writing $v_z$ for the orthogonal projection of $v$ to $(\mathcal{L}_z \oplus \overline{\mathcal{L}_z}) \cap V_\mathbb{R}$, we have
\begin{equation}
h_z(s_v) = -Q(v_z,\overline{v_z}).
\end{equation}

We endow $\mathcal{O}_\mathbb{D}$ with the metric $|\cdot|$ and denote by $s^*_v: \mathcal{O}_\mathbb{D} \to \mathcal{L}$ the adjoint of $s_v:\mathcal{L} \to \mathcal{O}_\mathbb{D}$ with respect to these metrics, so that
\begin{equation} \label{eq:h(s_v)}
s_v^*s_v(z) = s_vs_v^*(z)=h_z(s_v).
\end{equation}
We denote by $\nabla_{\mathcal{L}}$ the Chern connection on $(\mathcal{L},h_\mathcal{L})$. On $\mathbb{D}-\mathbb{D}_v$, the section $s_v^{-1}$ gives a trivialization of $\mathcal{L}$. We have
\begin{equation}
\nabla_{\mathcal{L}}(fs_v^{-1}) = df \otimes s_v^{-1}- f\frac{\partial h(s_v)}{h(s_v)} \otimes s_v^{-1}
\end{equation}
and hence the curvature of $\mathcal{L}$ is given by
\begin{equation} \label{eq:curvature_L_dual}
\nabla_{\mathcal{L}}^2 = d(-\frac{\partial h(s_v)}{h(s_v)})=\frac{\overline{\partial}h(s_v) \wedge \partial h(s_v)}{h(s_v)^2}-\frac{\overline{\partial}\partial h(s_v)}{h(s_v)}.
\end{equation}

\subsection{The superconnection $\nabla_v$ and the form $\varphi(v)$} \label{subsection:superconnection_def_herm_sym_case} We regard $\mathcal{O}_\mathbb{D} \oplus \mathcal{L}$ as a super line bundle, with even part $\mathcal{O}_\mathbb{D}$ and odd part $\mathcal{L}$. To a vector $v \in V_\mathbb{R}$ we attach the superconnection
\begin{equation}
\nabla_v = \nabla_{\mathcal{O}_\mathbb{D}} + \nabla_{\mathcal{L}} + i \cdot \sqrt{2\pi} \left( \begin{array}{cc} 0 & s_v \\ s_v^* & 0 \end{array} \right),
\end{equation}
where $\nabla_{\mathcal{O}_\mathbb{D}}:=d$. Define
\begin{equation}
\varphi(v) = e^{-\pi Q(v,v)} \cdot tr_s(e^{\nabla_v^2}) \in \oplus_{p \geq 0} A^{p,p}(\mathbb{D}).
\end{equation}
More generally, given vectors $v_1,\ldots,v_r \in V_\mathbb{R}$, we consider the total complex of
\begin{equation}
\otimes_{1 \leq i \leq r} (s_{v_i}: \mathcal{L} \to \mathcal{O}_\mathbb{D})
\end{equation}
as a super vector bundle, and denote by $\nabla_{v_1,\ldots,v_r}$ the superconnection induced by the $\nabla_{v_i}$. We define
\begin{equation}
\varphi(v_1,\ldots,v_r) = e^{-\pi (\sum_i Q(v_i,v_i))} \cdot tr_s(e^{\nabla_{v_1,\ldots,v_r}^2}).
\end{equation}
By \eqref{eq:Chern_product}, we have $\varphi(v_1,\ldots,v_r) = \varphi(v_1) \wedge \cdots \wedge \varphi(v_r)$.

\subsection{Computation of $\varphi(v)[2]$} \label{subsection:explicit_comp_deg_2} We now give an explicit formula for the degree two component of $\varphi(v)$. Note that $\mathcal{O}_\mathbb{D} \oplus \mathcal{L}$ is a super line bundle; in particular, the algebra $\Gamma(End(\mathcal{O}_\mathbb{D} \oplus \mathcal{L}))$ is supercommutative. As the tensor product of two supercommutative algebras, the algebra
\begin{equation}
A(\mathbb{D},End(\mathcal{O}_\mathbb{D} \oplus \mathcal{L}))=A^*(\mathbb{D}) \hat{\otimes}_{\mathcal{C}^\infty(X)} \Gamma(End(\mathcal{O}_\mathbb{D} \oplus \mathcal{L})),
\end{equation}
is also supercommutative. Writing $\nabla_v^2=r_2+r_1+r_0$ with $r_i \in A^i(\mathbb{D},End(\mathcal{O}_\mathbb{D} \oplus \mathcal{L}))$ even, we see that $r_0$, $r_1$ and $r_2$ commute. We conclude that
\begin{equation}
\begin{split}
tr_s(e^{\nabla_v^2})[2] &= tr_s(e^{r_0} e^{r_1} e^{r_2})[2] \\
&=e^{r_0}\left(tr_s(r_2)+\frac{1}{2}tr_s(r_1^2) \right)
\end{split}
\end{equation}
since
\begin{equation}
r_0=-2\pi \left( \begin{array}{cc} s_vs_v^* & 0 \\ 0 & s_v^*s_v \end{array} \right) \in \Gamma(End(\mathcal{O}_\mathbb{D} \oplus \mathcal{L}))
\end{equation}
is a scalar. It remains to compute $tr_s(r_2)$ and $tr_s(r_1^2)$. By \eqref{eq:curvature_L_dual}, we have
\begin{equation}
tr_s(r_2)=tr_s(\nabla_{\mathcal{L}}^2)=- \left(\frac{\overline{\partial}h(s_v) \wedge \partial h(s_v)}{h(s_v)^2}-\frac{\overline{\partial}\partial h(s_v)}{h(s_v)}
\right).
\end{equation}
Consider now $r_1^2$. We have
\begin{equation}
\begin{split}
(\nabla_{\mathcal{O}_\mathbb{D}}s_v+s_v\nabla_{\mathcal{L}})(s_v^{-1}) &= s_v \nabla_{\mathcal{L}} s_v^{-1} = -\frac{\partial h(s_v)}{h(s_v)} \otimes s_v s_v^{-1} = -\frac{\partial h(s_v)}{h(s_v)} \otimes 1\\
(\nabla_{\mathcal{L}} s_v^* + s_v^*\nabla_{\mathcal{O}_\mathbb{D}})(1) &= \nabla_{\mathcal{L}}(h(s_v) s_v^{-1}) \\ 
&= (dh(s_v)-\frac{\partial h(s_v)}{h(s_v)}h(s_v)) \otimes s_v^{-1} = \overline{\partial}h(s_v) \otimes s_v^{-1}
\end{split}
\end{equation}
and hence
\begin{equation}
\frac{1}{2}tr_s(r_1^2) = 2\pi \frac{\partial h(s_v) \wedge \overline{\partial}h(s_v)}{h(s_v)}.
\end{equation}
By \eqref{eq:h(s_v)}, we arrive at the formula
\begin{equation}\label{eq:explicit_deg_2}
\varphi(v)[2]=e^{-\pi(Q(v,v)+2h(s_v))}\left(-\frac{\overline{\partial}h(s_v) \wedge \partial h(s_v)}{h(s_v)^2}+\frac{\overline{\partial}\partial h(s_v)}{h(s_v)} + 2\pi \frac{\partial h(s_v) \wedge \overline{\partial}h(s_v)}{h(s_v)} \right).
\end{equation}

\subsection{The case $n=1$} \label{subsec:computation_upper_half_plane} Consider the case $n=1$, where $\mathbb{D}$ can be identified with the Poincar\'e upper half plane $\mathbb{H}$. In \cite[p. 603]{KudlaCD}, a form
\begin{equation}\varphi_{KM}(v) \in A^{1,1}(\mathbb{H})
\end{equation}
is defined for every $v \in V_\mathbb{R}$. With the notation of that paper, for $\tau \in \mathbb{H}$ we have $R(v,\tau)=h_\tau(s_v)$ (ibid., (11.14)) and comparing \eqref{eq:explicit_deg_2} with ibid., (11.27), (11.37)-(11.40) shows that
\begin{equation}
\varphi(v)^*[2] = \varphi_{KM}(v), \quad v \in V_\mathbb{R}.
\end{equation}

\subsection{Restriction of $\varphi(v)$ to a hermitian subdomain of type $SO(n-1,2)$} Suppose that $w \in V_\mathbb{R}$ satisfies $Q(w,w)>0$; then $\mathbb{D}_w:=\mathrm{div}(s_w)$ is non-empty. The stabiliser $G_w$ of $w$ in $G$ is isomorphic to $SO(n-1,2)$ and $\mathbb{D}_w$ can be identified with the hermitian symmetric domain attached to $G_w$; denote by $\mathcal{L}_w$ the corresponding tautological bundle. The restriction of $\mathcal{L}$ to $\mathbb{D}_w$ is then isometric to $\mathcal{L}_w$, and for any $v \in V_\mathbb{R}$ we have an isometry
\begin{equation}
\left. (\mathcal{L} \overset{s_v}{\to} \mathcal{O}_\mathbb{D}) \right|_{\mathbb{D}_w} \cong \mathcal{L}_w \overset{s_{v'}}{\to}
 \mathcal{O}_{\mathbb{D}_w}
\end{equation} 
with $v'$ the orthogonal projection of $v$ to $w^\perp$; this isometry identifies $\nabla_v|_{\mathbb{D}_w}$ with $\nabla_{v'}$. We obtain the following formula for the restriction of $\varphi(v)$ to $\mathbb{D}_w$:
\begin{equation} \label{eq:formula_restriction}
\left. \varphi(v) \right|_{\mathbb{D}_w} = e^{-\pi Q(v'',v'')} \cdot \varphi(v'), \quad v'':=v-v'.
\end{equation}

\subsection{Comparison with the Kudla-Millson forms} \label{subsection:comparison_varphi_varphi_KM} Using formula \eqref{eq:formula_restriction} for the restriction and the comparison with $\varphi_{KM}$ in \autoref{subsec:computation_upper_half_plane}, we can now show how to recover the forms $\varphi_{KM}(v_1,\ldots,v_r)$ in \cite[Thm. 7.1]{KudlaOrthogonal} (denoted there by $\varphi^{(r)}$) from the forms $\varphi(v_1,\ldots,v_r)$. We note that 
\begin{equation} \label{eq:wedge_varphi_KM}
\varphi_{KM}(v_1,\ldots,v_r) = \varphi_{KM}(v_1) \wedge \ldots \wedge \varphi_{KM}(v_r).
\end{equation}
\begin{theorem} \label{thm:comparison_varphi_varphi_KM}
For any $v_1,\ldots,v_r \in V_\mathbb{R}$ ($r \geq 1$), we have
\begin{equation}
\varphi(v_1,\ldots,v_r)^*[2r] = \varphi_{KM}(v_1,\ldots,v_r).
\end{equation}
\end{theorem}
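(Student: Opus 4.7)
The plan is to split the argument into two stages: first reduce to $r=1$ by a degree count in the wedge product, then prove the $r=1$ case by induction on $n$ using the restriction formula \eqref{eq:formula_restriction}. For the reduction, I would use \eqref{eq:varphi_wedge} together with Proposition~\ref{prop:phi_main_properties}(c): the latter, applied with $r=1$, forces each $\varphi(v_i)$ to vanish in degree $0$, so the degree-$2r$ piece of the wedge product is pinned down to
\[
\varphi(v_1,\ldots,v_r)[2r]=\varphi(v_1)[2]\wedge\cdots\wedge\varphi(v_r)[2].
\]
Since the involution $*$ is compatible with wedge products, combining with the product formula \eqref{eq:wedge_varphi_KM} will reduce the theorem to the single-variable statement $\varphi(v)^*[2]=\varphi_{KM}(v)$ for every $v\in V_\mathbb{R}$ in every signature $(n,2)$.

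Next I would induct on $n$. The base case $n=1$ is already handled in \autoref{subsec:computation_upper_half_plane}. For $n\geq 2$, fix $w\in V_\mathbb{R}$ with $Q(w,w)>0$ and decompose $v=v'+v''$ with $v'\in w^\perp$ and $v''\in\mathbb{R}w$. The restriction formula \eqref{eq:formula_restriction} gives $\varphi(v)|_{\mathbb{D}_w}=e^{-\pi Q(v'',v'')}\varphi(v')$, and the Kudla-Millson forms satisfy the analogous restriction property, a standard consequence of the Thom-form construction in \cite{KudlaMillson1}. Applying the inductive hypothesis on the subdomain $\mathbb{D}_w$ (of type $SO(n-1,2)$) will show that the difference
\[
\alpha(v):=\varphi(v)^*[2]-\varphi_{KM}(v)
\]
satisfies $\alpha(v)|_{\mathbb{D}_w}=0$ for every admissible $w$.

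The hard part will be the final step: concluding $\alpha(v)\equiv 0$ on $\mathbb{D}$ from its vanishing on every $\mathbb{D}_w$. At a point $z\in\mathbb{D}$ the tangent spaces $T_z\mathbb{D}_w$ are complex hyperplanes in $T_z\mathbb{D}$ cut out by the $\mathbb{C}$-linear functionals $Q(w,\cdot)$ as $w$ ranges over the real $n$-dimensional positive part of $V_\mathbb{R}$ determined by $z$, and a $(1,1)$-form need not be determined by its restriction to such a family when $n$ is small. To handle this I would combine the $G$-equivariance $g^*\alpha(gv)=\alpha(v)$ with the real-analytic dependence of $\alpha(v)$ on both $v$ and $z$, using the freedom to vary $v\in V_\mathbb{R}$ to sweep out the Hermitian form associated with $\alpha(v)_{z_0}$ at a base point $z_0$. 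An alternative, more computational route bypasses the covering issue entirely: one compares the explicit formula \eqref{eq:explicit_deg_2} for $\varphi(v)[2]$ at a base point directly with the standard base-point model for $\varphi_{KM}$, extending the $n=1$ verification of \autoref{subsec:computation_upper_half_plane}.
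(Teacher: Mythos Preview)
Your proposal tracks the paper's proof almost exactly: reduce to $r=1$ via \eqref{eq:varphi_wedge}, Proposition~\ref{prop:phi_main_properties}(c), and \eqref{eq:wedge_varphi_KM}; then induct on $n$, with base case $n=1$ from \autoref{subsec:computation_upper_half_plane} and the inductive step supplied by the restriction formula \eqref{eq:formula_restriction} together with its Kudla--Millson counterpart (which the paper cites as \cite[Lemma~7.3]{KudlaOrthogonal}).

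The only place you diverge is the final step, which you correctly single out as the crux. The paper does not use $G$-equivariance, real-analyticity in $v$, or a direct base-point computation. It closes the argument in one sentence: both $\varphi(v)^*[2]$ and $\varphi_{KM}(v)$ are \emph{real} $(1,1)$-forms, and the global sections of $\mathcal{O}_{\mathbb{P}(V_\mathbb{C})}(1)$ separate points and tangent vectors. The intended mechanism is that, at a point $z$, the hyperplanes $T_z\mathbb{D}_w$ (for real $w$ with $z\in\mathbb{D}_w$) are cut out by an $n$-dimensional real family of covectors whose complex span is all of $T_z^*\mathbb{D}$; for $n\geq 3$ this already forces any real $(1,1)$-form vanishing on every such hyperplane to vanish, since every tangent vector $X$ lies in some $T_z\mathbb{D}_w$ and one recovers the full Hermitian form by polarization. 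Your worry about small $n$ is not misplaced---for $n=2$ the purely pointwise version of this argument leaves a one-dimensional ambiguity---but the paper does not pause over it, and your fallback of checking \eqref{eq:explicit_deg_2} directly against the explicit base-point model of $\varphi_{KM}$ would settle that case cleanly. Either way, your route (a) is more elaborate than what the paper records, and route (b), while valid, would render the induction superfluous.
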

\begin{proof}
By Proposition~\ref{prop:phi_main_properties}.$(c)$ and \eqref{eq:wedge_varphi_KM} we may assume that $r=1$. Argue by induction on $n$, where the case $n=1$ is \autoref{subsec:computation_upper_half_plane}. So let $n \geq 2$ and assume that the statement holds for $n-1$. Consider an analytic divisor $\mathbb{D}_w=\mathrm{div}(s_w)$ in $\mathbb{D}$. The restriction to $\mathbb{D}_w$ of the form $\varphi_{KM}$ is described in \cite[Lemma 7.3]{KudlaOrthogonal}. Comparing with \eqref{eq:formula_restriction}, we see that the restrictions of $\varphi(v)^*[2]$ and $\varphi_{KM}(v)$ to any such divisor agree. This is enough since the forms $\varphi(v)^*[2]$ and $\varphi_{KM}(v)$ are real and the global sections of $\mathcal{O}_{\mathbb{P}(V(\mathbb{C}))}(1)$ separate points and tangent vectors. 
 \end{proof}

\section{Properties of $\varphi$} \label{section:Properties_of_varphi}

We now go back to the general case and study the forms $\varphi(\cdot)$ on an arbitrary period domain $\mathbb{D}$. After showing that $\varphi(v)$ is smooth and rapidly decreasing in $v$ in \textsection \ref{subsection:upper_bound_Duhamel}, we recall the formulas defining the Weil representation $\omega$ of the metaplectic double cover $Mp_{2r,\mathbb{R}}$ on $\mathcal{S}(V_\mathbb{R}^r)$ in \textsection \ref{subsection:Weil_representation}. Then (\textsection \ref{subsection:behaviour_varphi_max_compact}-\textsection \ref{subsection:behaviour_varphi_lowering}) we determine the behaviour of $\varphi(\cdot)$ under certain operators defined using $\omega$, and we prove \autoref{thm:thm_1}. We conclude by showing (\textsection \ref{subsection:Thom_property}) that the forms $\varphi(v_1,\ldots,v_r)$ have a Thom form property on certain arithmetic quotients of $\mathbb{D}^+$. The proof uses a result of Bismut \cite{BismutInv90} that, in turn, relies on the Mathai-Quillen formula computing the Chern form of Koszul complexes.

\subsection{Rapid decay of $\varphi(\cdot)$} \label{subsection:upper_bound_Duhamel} Let us give an upper bound on the growth of $\varphi(v)$ as a function of $v$. Following \cite[p. 144]{SouleBook}, we use Duhamel's formula: this states that for any two endomorphisms $A$, $B$ of a finite-dimensional complex vector space, we have
\begin{equation}
e^{-B}-e^{-A} = - \int_0^1 e^{-sA}(B-A)e^{-(1-s)B}ds.
\end{equation}
(Proof: the integrand is the derivative of $e^{-sA}e^{-(1-s)B}$.) Applying this to the elements $\nabla_v^2(z)$ and $\nabla_v^2[0](z)$ of $End(\wedge T^*\mathbb{D}_z \otimes \wedge \mathcal{F}_z)$ and iterating, we can rewrite $\varphi$ as follows.
Let
\begin{equation} \label{eq:def_q_z}
q_z(v):=\frac{1}{2}Q(v,v)+h_z(s_v)
\end{equation}
and note that, by \eqref{eq:Riemann_bilinear}, the quadratic form $q_z : V_\mathbb{R} \to \mathbb{R}$ is positive definite for any $z \in \mathbb{D}$. Let 
\begin{equation}
\Delta^k=\{(t_1,\ldots,t_k) \in \mathbb{R}^k | 0 \leq t_1 \leq \ldots \leq t_k \leq 1 \}
\end{equation}
be the $k$-simplex. Setting $\nabla_v^2(z)=\nabla_v^2[0](z)+R(v,z)$, we have
\begin{equation}
\nabla_v^2[0](z)=-2\pi(s_vs_v^* + s_v^*s_v)(z)=-2\pi h_z(s_v) \cdot id,
\end{equation}
and hence
\begin{equation} \label{eq:Duhamel_expansion}
\begin{split}
&e^{-\pi Q(v,v)}   e^{\nabla_v^2(z)} = e^{-2\pi q_z(v)} + \\
& \sum_{k \geq 1} (-1)^k \int_{\Delta^k} e^{-(1-t_k)2\pi q_z(v)}R(v,z) e^{-(t_k-t_{k-1})2\pi q_z(v)} \cdots R(v,z) e^{-t_1 2\pi q_z(v)}dt_1 \cdots dt_k,
\end{split}
\end{equation}
where the sum has at most $n=\dim_\mathbb{R} \mathbb{D}$ terms since $R(v,z)$ has positive degree. 

Let $||\cdot||_{K,k}$ be an algebra seminorm on the algebra of endomorphisms of the vector bundle $\wedge T^*\mathbb{D} \otimes \wedge \mathcal{F}$ measuring uniform convergence on a compact subset $K$ of partial derivatives up to order $k$. Let $q_K:V_\mathbb{R} \to \mathbb{R}$ be a positive definite quadratic form such that $q_K \leq q_z$ for every $z \in K$. Arguing as in \cite[\textsection 4]{QuillenChern} one shows that there is a constant $M=M_{K,k}$ such that
\begin{equation}
||e^{-2\pi q_z(v)}||_{K,k} \leq M e^{-2\pi q_K(v)}, \quad v \in V_\mathbb{R}.
\end{equation}
Since $R(v,z) = \nabla^2 + [\nabla,i \sqrt{2 \pi}(s_v + s_v^*)]$ grows linearly with $v$, we can find an affine function on $V_\mathbb{R}$ giving an upper bound for $||R(v,z)||_{K,k}$. Using \eqref{eq:Duhamel_expansion}, we conclude that there are constants $C=C_{K,k}$ and $a$ such that
\begin{equation} \label{eq:varphi_estimate}
||e^{-\pi Q(v,v)} e^{\nabla_v^2(z)}||_{K,k} \leq C(1+q_K(v))^a e^{-2\pi q_K(v)}, \quad v \in V_\mathbb{R}.
\end{equation}
The same argument shows that a similar bound holds after taking any number of derivatives with respect to $v$. 

We denote by $\mathcal{S}(V_\mathbb{R}^r)$ the Schwartz space of smooth, rapidly decreasing functions on $V_\mathbb{R}^r$. The estimate above implies the following result. 

\begin{proposition} \label{prop:superconn_Schwartz}
For every $z \in \mathbb{D}$, we have $\varphi(v_1,\ldots,v_r,z) \in \mathcal{S}(V_\mathbb{R}^r) \otimes \wedge T^* \mathbb{D}_z$.
\end{proposition}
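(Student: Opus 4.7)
The plan is to deduce the proposition directly from the estimate \eqref{eq:varphi_estimate}, its analogues for $v$-derivatives, and the factorisation \eqref{eq:varphi_wedge}. First I would reduce to the case $r=1$: since $\varphi(v_1,\ldots,v_r) = \varphi(v_1) \wedge \cdots \wedge \varphi(v_r)$, the value at a fixed $z$ is the image under the (continuous, multilinear) wedge product map
\begin{equation*}
(\wedge T^*_z \mathbb{D})^{\otimes r} \to \wedge T^*_z \mathbb{D}
\end{equation*}
of the external tensor product $\varphi(v_1,z) \boxtimes \cdots \boxtimes \varphi(v_r,z)$. An external tensor product of Schwartz functions on $V_\mathbb{R}$ (in independent variables) is Schwartz on $V_\mathbb{R}^r$, so once the $r=1$ case is established the general case follows.

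For $r=1$, the inequality \eqref{eq:varphi_estimate} applied with $K=\{z\}$ and $k=0$ gives the required polynomial-times-Gaussian pointwise bound on $\varphi(v,z)$. To upgrade this to Schwartz decay I need the same bound after applying an arbitrary differential operator $\partial_v^\alpha$ in the $V_\mathbb{R}$-variable. The paper has already asserted that such a bound follows from the same Duhamel argument; I would spell this out by noting that the map $v \mapsto \nabla_v^2(z)$ is a polynomial of degree at most $2$ with coefficients in $\mathrm{End}(\wedge T^*_z \mathbb{D} \otimes \wedge \mathcal{F}_z) \otimes A^*(\mathbb{D})_z$, so every $\partial_v^\alpha \nabla_v^2$ is a polynomial of degree at most $\max(2-|\alpha|,0)$. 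Differentiating the Duhamel expansion \eqref{eq:Duhamel_expansion} term by term under the simplex integral and applying Leibniz to the factors $e^{-t_j 2\pi q_z(v)}$ (whose $v$-derivatives produce extra polynomial factors in $v$ but preserve the Gaussian $e^{-2\pi q_z(v)}$), one obtains a bound of the same shape $C_\alpha (1+q_K(v))^{a_\alpha} e^{-2\pi q_K(v)}$. This yields faster-than-polynomial decay of every $\partial_v^\alpha \varphi(v,z)$, which is exactly the Schwartz condition.

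The main (minor) obstacle is keeping track of the combinatorics in the derivative step: because $\nabla_v^2$ contains a quadratic-in-$v$ scalar piece $-2\pi h_z(s_v)\cdot\mathrm{id}$ together with terms of positive form-degree that are constant, linear, or quadratic in $v$, the Duhamel expansion already mixes these contributions, and one must verify that differentiating in $v$ does not produce factors that defeat the Gaussian. The positive definiteness of $q_z$ (which follows from the second Riemann bilinear relation \eqref{eq:Riemann_bilinear}, as already noted after \eqref{eq:def_q_z}) is what makes the Gaussian genuinely decay in all directions of $V_\mathbb{R}$, and this is the essential input; once one has it, the argument of \cite[\textsection 4]{QuillenChern} transplants verbatim to the present setting, so no new estimate beyond what is already in the paper is required.
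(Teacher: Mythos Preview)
Your proposal is correct and follows essentially the same route as the paper: the paper derives the proposition directly from the estimate \eqref{eq:varphi_estimate} together with the remark immediately preceding it that the same bound holds after taking any number of $v$-derivatives. Your reduction to $r=1$ via \eqref{eq:varphi_wedge} and your explicit account of why differentiating the Duhamel expansion preserves the polynomial-times-Gaussian shape simply spell out what the paper leaves implicit; one small inaccuracy is that $R(v,z)$ is affine (not quadratic) in $v$, but this only helps.
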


\subsection{Weil representation} \label{subsection:Weil_representation} As shown in Proposition~\ref{prop:superconn_Schwartz}, the form $\varphi(v_1,\ldots,v_r)$ can be regarded as a differential form on $\mathbb{D}$ valued in the Schwartz space $\mathcal{S}(V_\mathbb{R}^r)$. Let 
\begin{equation}
Sp_{2r}(\mathbb{R}) = \left\{ g \in GL_{2r}(\mathbb{R}) \left| g \left( \begin{array}{cc} 0 & 1_r \\ -1_r & 0 \end{array} \right) {^t g} = \left( \begin{array}{cc} 0 & 1_r \\ -1_r & 0 \end{array} \right) \right. \right\}
\end{equation}
be the symplectic group of rank $r$ and let $Mp_{2r,\mathbb{R}}$ be its metaplectic double cover. The group $Mp_{2r,\mathbb{R}}$ acts on $\mathcal{S}(V_\mathbb{R}^r)$ via the Weil representation $\omega=\omega_\psi$ attached to the additive character $\psi(x):=e^{2\pi ix}$. Let us recall the formulas that define this representation.

Let $(\cdot,\cdot)_\mathbb{R}$ be the Hilbert symbol on $\mathbb{R}$ and $h_\mathbb{R}(V)$ be the Hasse invariant of $V_\mathbb{R}$. Define a quadratic character $\chi_{V_\mathbb{R}}$ of $\mathbb{R}^\times$ by
\begin{equation}
\chi_{V_\mathbb{R}}(a) = ((-1)^{m(m-1)/2}\det(V_\mathbb{R}),a)_\mathbb{R}.
\end{equation}
Let $\gamma_\mathbb{R}(a\psi)=e^{\tfrac{2\pi i}{8} sgn(a)}$ and $\gamma_\mathbb{R}(a,\psi)=\tfrac{\gamma_\mathbb{R}(a\psi)}{\gamma_\mathbb{R}(\psi)}$. Define an $8$-th root of unity $\gamma_{V_\mathbb{R}}$ by
\begin{equation}
\gamma_{V_\mathbb{R}} = \gamma_\mathbb{R}(\det(V),\tfrac{1}{2}\psi)\gamma_\mathbb{R}(\tfrac{1}{2}\psi)^m h_\mathbb{R}(V).
\end{equation} 
We identify $Mp_{2r,\mathbb{R}}$ with $Sp_{2r}(\mathbb{R}) \times \mu_2$ as in \cite{KudlaSplitting} and consider the subgroups
\begin{equation}
\begin{split}
M &= \left\{ (m(a),\pm 1) \right\}, \\
N &= \left\{ (n(b),1) \right\},
\end{split}
\end{equation}
where for $a \in GL_n(\mathbb{R})$ and $b \in Sym_n(\mathbb{R})$ we write
\begin{equation}
\begin{split}
m(a)&=\left( \begin{array}{cc} a & \\ & ^t a^{-1} \end{array} \right), \ a \in GL_r(\mathbb{R}),\\
n(b) &= \left( \begin{array}{cc} 1_r & b \\ & 1_r \end{array} \right), \ b \in Sym_r(\mathbb{R}).
\end{split}
\end{equation}
The group $P=MN$ is then a maximal parabolic of $Mp_{2r,\mathbb{R}}$, and we have the formulas
\begin{equation}
\begin{split}
\omega(m(a),\epsilon)\varphi(v) &= \epsilon \chi_{V_\mathbb{R}}(\det(a)) \left|\det(a)\right|^{m/2} \varphi(v\cdot a), \\
&\quad \times \left\{ \begin{array}{cc} 1, & \text{ if } m \text{ is even,} \\ \gamma_\mathbb{R}(\det(a),\tfrac{1}{2}\psi)^{-1}, & \text{ if } m \text{ is odd,} \end{array} \right. \\
\omega(n(b),1)\varphi(v) &= \psi(tr(bQ(v,v)/2)) \varphi(v), \\
\omega\left(\left(\begin{array}{cc} & -1_r \\ 1_r &  \end{array}\right),1 \right)\varphi(v) &= \gamma_{V_\mathbb{R}}^{-r} \cdot \int_{V_\mathbb{R}^r} \varphi(w) \psi(-tr Q(v,w))dw,
\end{split}
\end{equation}
where $\varphi \in \mathcal{S}(V_\mathbb{R}^r)$ and $dw$ is the self-dual Haar measure on $V_\mathbb{R}^r$ with respect to the pairing given by $\psi(tr Q(v,w))$.

\subsection{Behaviour of $\varphi$ under the action of the maximal compact subgroup of $Mp_{2r,\mathbb{R}}$} \label{subsection:behaviour_varphi_max_compact} Consider the maximal compact subgroup
\begin{equation}
\left\{ \left. \left( \begin{array}{cc} a & -b \\ b & a \end{array} \right) \right| a+ib \in U(r)  \right\} \cong U(r)
\end{equation}
of $Sp_{2r}(\mathbb{R})$ and denote by $\widetilde{U(r)}$ its inverse image in $Mp_{2r,\mathbb{R}}$. It admits a character 
\begin{equation}
{\det}^{1/2}: \widetilde{U(r)} \to S^1 \subset \mathbb{C}^\times
\end{equation}
whose square factors through $U(r)$ and defines the usual determinant character $\det:U(r) \to \mathbb{C}^\times$. 
Our next goal is to study the behaviour of the forms $\varphi(v_1,\ldots,v_r)$ under the action of $\mathfrak{u}(r)=Lie(U(r))$. For this purpose, it is convenient to fix an orthogonal basis $v_1,\ldots,v_{s+t}$ of $V_\mathbb{R}$ with
\begin{equation}
Q(v_j,v_j) = \left\{ \begin{array}{rl} 1, & \text{ if } 1 \leq j \leq s, \\ -1, & \text{ if } s+1 \leq j \leq t. \end{array} \right.
\end{equation}
We denote the corresponding dual basis of $V_\mathbb{R}^\vee$ by $x_1,\ldots,x_{s+t}$. We write $\chi:\mathfrak{u}(r) \to i\mathbb{R}$ for the character obtained by differentiating $\det:U(r) \to S^1$. When $r=1$, the element $\chi^{-1}(i) \in \mathfrak{u}(1)$ acts on $\mathcal{S}(V_\mathbb{R})$ by the operator
\begin{equation}
\pi i \sum_{1 \leq j \leq s+t} Q(v_j,v_j) \cdot \left(x_j^2- \frac{1}{(2\pi)^2} \frac{d^2}{dx_j^2}\right).
\end{equation} 

\begin{lemma} \label{lemma:weight}
Let $X \in \mathfrak{u}(r)$. There exists a form $\nu(X,\cdot)$ of odd degree on $\mathbb{D}$, valued in $\mathcal{S}(V_\mathbb{R}^r)$, such that
\begin{equation} \label{eq:weight_phi}
\omega(X)\varphi(v) = \frac{m}{2}  \chi(X) \varphi(v) + d\nu(X,v), \quad v \in V_\mathbb{R}^r,
\end{equation}
and satisfying $g^*\nu(X,gv)=\nu(X,v)$ for $g \in G$.
\end{lemma}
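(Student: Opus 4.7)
The plan is to verify the identity first for $r = 1$ via direct computation with the explicit formula for $\omega(\chi^{-1}(i))$ given before the lemma, and then bootstrap to general $r$ via a short Lie-algebra argument. The key preliminary observation for the bootstrap is that the set
\[
\mathfrak{L} := \{X \in \mathfrak{u}(r) : \omega(X)\varphi - \tfrac{m}{2}\chi(X)\varphi = d\nu_X \text{ for some } G\text{-invariant odd form } \nu_X\}
\]
is a Lie subalgebra of $\mathfrak{u}(r)$: since $\omega(X)$ acts only on the $\mathcal{S}(V_\mathbb{R}^r)$-factor it commutes with the exterior derivative $d$ on $\mathbb{D}$, and $\chi$ is a Lie algebra character, so if $X, Y \in \mathfrak{L}$ are witnessed by $\nu_X, \nu_Y$ then $[X,Y] \in \mathfrak{L}$ is witnessed by $\omega(X)\nu_Y - \omega(Y)\nu_X$.

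For the $r=1$ case, apply the explicit operator $\omega(\chi^{-1}(i)) = \pi i\sum_j \epsilon_j\bigl(x_j^2 - (2\pi)^{-2}\partial_{x_j}^2\bigr)$ (with $\epsilon_j = Q(v_j,v_j)$) to $\varphi(v) = e^{-\pi Q(v,v)}\varphi^0(v)$. Setting $\Delta_Q := \sum_j \epsilon_j\partial_{x_j}^2$ and expanding by the Leibniz rule, using
\[
\Delta_Q e^{-\pi Q(v,v)} = (4\pi^2 Q(v,v) - 2\pi m)\,e^{-\pi Q(v,v)},
\]
the contribution $\pi i Q(v,v)\varphi$ cancels exactly against the $4\pi^2 Q(v,v)$-part from $\Delta_Q e^{-\pi Q(v,v)}$, while the $-2\pi m$-part produces the scalar $\tfrac{m}{2}\chi(X)\varphi = \tfrac{im}{2}\varphi$, leaving
\[
\omega(X)\varphi - \tfrac{m}{2}\chi(X)\varphi = e^{-\pi Q(v,v)}\left(i\sum_j x_j\partial_{x_j}\varphi^0 - \tfrac{i}{4\pi}\Delta_Q\varphi^0\right).
\]
Both remainder pieces are $d$-exact via transgression: since $\partial_{x_j}\nabla_v = i\sqrt{2\pi}(s_{v_j}+s_{v_j}^*)$, the Bismut--Quillen transgression identity gives
\[
\partial_{x_j}\varphi^0 = d\,tr_s\bigl(i\sqrt{2\pi}(s_{v_j}+s_{v_j}^*)\,e^{\nabla_v^2}\bigr).
\]
Multiplying by $x_j$ and summing (using linearity $\sum_j x_j s_{v_j} = s_v$) produces exactness of the Euler term, and applying $\partial_{x_j}$ a second time — which commutes with $d$ since the two act on disjoint sets of variables — does the same for $\Delta_Q\varphi^0$. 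Multiplication by the $\mathbb{D}$-constant $e^{-\pi Q(v,v)}$ preserves exactness, yielding an explicit $\nu(X,v)$ which is of odd degree on $\mathbb{D}$ since each $tr_s$-term pairs an odd endomorphism ($s_v+s_v^*$ or $s_{v_j}+s_{v_j}^*$) against an even exponential.

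For general $r$, the $r=1$ result applied factor-by-factor to $\varphi(v_1,\ldots,v_r) = \varphi(v_1) \wedge \cdots \wedge \varphi(v_r)$, combined with closedness of $\varphi$ (Proposition~\ref{prop:phi_main_properties}(a)), places every diagonal $iE_{jj}$ in $\mathfrak{L}$. Proposition~\ref{prop:phi_main_properties}(e) places $\mathfrak{so}(r) \subset \mathfrak{L}$, since the restriction of $\omega$ to $SO(r) \subset U(r)$ coincides with the right $SO(r)$-action on $V_\mathbb{R}^r$, which fixes $\varphi$. The commutator $[iE_{jj}, E_{jk}-E_{kj}] = i(E_{jk}+E_{kj})$ then produces the remaining symmetric imaginary off-diagonal generators of $\mathfrak{u}(r)$, so $\mathfrak{L} = \mathfrak{u}(r)$. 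The invariance $g^*\nu(X,gv) = \nu(X,v)$ follows from Proposition~\ref{prop:phi_main_properties}(b) and the fact that $G$ and $Mp_{2r,\mathbb{R}}$ act by commuting operators (Howe duality), since the explicit $\nu_X$ constructed above is assembled $G$-equivariantly from the Chern connection and the sections $s_v, s_v^*$. The main technical obstacle is the careful Leibniz bookkeeping in the $r=1$ step that isolates the scalar eigenvalue $\tfrac{m}{2}\chi(X)$; once this is in hand, transgression plus the subalgebra trick make the extension to general $r$ purely structural.
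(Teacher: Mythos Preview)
Your $r=1$ computation is correct and essentially identical to the paper's: both isolate the scalar $\tfrac{im}{2}$ from the Gaussian and then use the transgression formula to exhibit the remainder as an exact form $d(\alpha+\beta)$, with $\alpha$ coming from the Euler term and $\beta$ from the $Q$-Laplacian term.

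For the passage to general $r$, the paper takes a slightly more direct route than your Lie-subalgebra argument: it writes $X=a+ib\in\mathfrak{u}(r)$ with $a\in\mathfrak{so}(r)$ and $b$ symmetric, kills $a$ via Proposition~\ref{prop:phi_main_properties}(e), and then conjugates $ib$ into the diagonal Cartan $\mathfrak{h}$ by some $k\in SO(r)$, reducing immediately to $r=1$ via the wedge factorization. Your commutator trick $[iE_{jj},E_{jk}-E_{kj}]=i(E_{jk}+E_{kj})$ accomplishes the same thing; note however that your formula $\nu_{[X,Y]}=\omega(X)\nu_Y-\omega(Y)\nu_X$ is missing the terms $\tfrac{m}{2}\chi(Y)\nu_X-\tfrac{m}{2}\chi(X)\nu_Y$ (harmless for the conclusion, but worth recording).

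The one place your argument is genuinely thinner than the paper's is the $G$-invariance of the $\beta$-piece. Your primitive for $\Delta_Q\varphi^0$ is $\sum_j \epsilon_j\, tr_s\bigl((s_{v_j}+s_{v_j}^*)\,\partial_{x_j}e^{\nabla_v^2}\bigr)$, which is written in terms of the \emph{fixed} basis vectors $v_j$, not in terms of $v$ alone; saying it is ``assembled $G$-equivariantly from $s_v,s_v^*$'' glosses over this. What is needed is that this basis-dependent sum is actually independent of the choice of $Q$-orthonormal basis. The paper verifies this explicitly: writing $\beta(v)=\sum_{k,l}\beta(v)_{k,l}$ with
\[
\beta(v)_{k,l}=\sum_j \epsilon_j\, tr_s\bigl((s_{v_j}+s_{v_j}^*)\nabla_v^{2k}[\nabla_v,s_{v_j}+s_{v_j}^*]\nabla_v^{2l}\bigr),
\]
one checks $g^*\beta(gv)_{k,l}=\beta(v)_{k,l}$ by substituting $v_j\mapsto g^{-1}v_j$, which is again a $Q$-orthonormal basis with the same signs $\epsilon_j$. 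This is a short computation, but it is not a formal consequence of Howe duality, and you should include it.
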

\begin{proof}
We first show that it suffices to prove the statement when $r=1$. We identify $\mathfrak{u}(r)$ with the space of skew-hermitian complex $r \times r$ matrices. For $x_1,\ldots,x_r \in \mathbb{R}$, let
\begin{equation}
d(x) = \left( \begin{array}{ccc} x_1 & \\ & \ddots & \\ & & x_r \end{array} \right)
\end{equation}
and consider the Cartan subalgebra $\mathfrak{h} \cong \mathfrak{u}(1)^r$ of $\mathfrak{u}(r)$ defined by
\begin{equation}
\mathfrak{h} = \left\{\left( \begin{array}{cc} 0 & id(x) \\ -id(x) & 0 \end{array} \right)\right\}.
\end{equation}
Let $X=a+ib \in \mathfrak{u}(r)$; then $a \in \mathfrak{so}(r)$ and hence $\omega(a)\varphi = 0$ by Proposition~\ref{prop:phi_main_properties}.(e). Moreover, $b$ is symmetric and hence we can find $k \in SO(r)$ and $h \in \mathfrak{h}$ such that $ib = kh{^tk}$. Thus it suffices to prove \eqref{eq:weight_phi} for $X \in \mathfrak{h}$, and this reduces to the case $r=1$ by \eqref{eq:varphi_wedge}.

Now assume that $r=1$ and let $X=\chi^{-1}(i)$. A direct computation shows that
\begin{equation}
\omega(X)e^{-\pi Q(v,v)} = \frac{im}{2} e^{-\pi Q(v,v)}.
\end{equation}
Hence we find that
\begin{equation}
\omega(X)\varphi(v) = \frac{im}{2} \varphi(v) -\frac{i}{4\pi} e^{-\pi Q(v,v)} \cdot \sum_{1 \leq j \leq s+t} \xi_j(v),
\end{equation}
where
\begin{equation}
\xi_j(v)= 4\pi x_j(v) \frac{d}{dx_j} \varphi^0(v) + Q(v_j,v_j) \frac{d^2}{dx_j^2} \varphi^0(v).
\end{equation}
The transgression formula \eqref{eq:transg_formula} shows that
\begin{equation}
\frac{d}{dx_j} \varphi^0(v) = d \ tr_s(\frac{d \nabla_v}{dx_j} e^{\nabla_v^2})
\end{equation}
and hence
\begin{equation}
\frac{d^2}{dx_j^2} \varphi^0(v) = d \ tr_s(\frac{d \nabla_v}{dx_j} \frac{d}{dx_j} e^{\nabla_v^2})
\end{equation}
and this implies that each $\xi_j$ is exact. Since $\tfrac{d\nabla_v}{dx_j}=i\sqrt{2\pi}(s_{v_j}+s_{v_j}^*)$, we find that \eqref{eq:weight_phi} holds with
$\nu(X,v)=\alpha(v)+\beta(v)$, where
\begin{equation}
\begin{split}
\alpha(v)&= \sqrt{2\pi} e^{-\pi Q(v,v)} \cdot tr_s((s_v+s_v^*)e^{\nabla_v^2}), \\
\beta(v) &= -\frac{i}{4\pi} e^{-\pi Q(v,v)} \sum_{1 \leq j \leq s+t} Q(v_j,v_j) \cdot tr_s(\frac{d\nabla_v}{dx_j} \frac{d}{dx_j}e^{\nabla_v^2}).
\end{split}
\end{equation}
The form $\alpha(v)$ satisfies $g^*\alpha(gv)=\alpha(v)$ for every $g \in G$. Regarding $\beta(v)$, we have $\tfrac{d\nabla_v^2}{dx_j}=[\nabla_v,\tfrac{d\nabla_v}{dx_j}]$ (supercommutator) and hence 
\begin{equation}
\beta(v)=\frac{i}{2}e^{-\pi Q(v,v)} \sum_{k,l \geq 0} \tfrac{1}{(k+l)!}\beta(v)_{k,l},
\end{equation}
where $\beta(v)_{0,0}=0$ and
\begin{equation}
\beta(v)_{k,l} = \sum_{1 \leq j \leq s+t} Q(v_j,v_j) \ tr_s((s_{v_j}+s_{v_j}^*)\nabla_v^{2k} [\nabla_v,s_{v_j}+s_{v_j}^*]\nabla_v^{2l}).
\end{equation}
For $g \in G$, we have $g^*s_{v_j}=s_{g^{-1}v_j}$ and $Q(g^{-1}v_j,g^{-1}v_j)=Q(v_j,v_j)$ and hence
\begin{equation}
\begin{split}
g^*\beta(gv)_{k,l} &= \sum_{1 \leq j \leq s+t} Q(v_j,v_j) \ tr_s((g^*s_{v_j}+g^*s_{v_j}^*)\nabla_v^{2k} [\nabla_v,g^*s_{v_j}+g^*s_{v_j}^*]\nabla_v^{2l}) \\
&= \sum_{1 \leq j \leq s+t} Q(g^{-1}v_j,g^{-1}v_j) \ tr_s((s_{g^{-1}v_j}+s_{g^{-1}v_j}^*)\nabla_v^{2k} [\nabla_v,s_{g^{-1}v_j}+s_{g^{-1}v_j}^*]\nabla_v^{2l}) \\
&= \beta(v)_{k,l},
\end{split}
\end{equation}
and this shows that $g^*\beta(gv)=\beta(v)$.

Finally, the fact that the form $\nu(X,v)$ takes values in $\mathcal{S}(V_\mathbb{R})$ follows from the estimate \eqref{eq:varphi_estimate}, which is valid for derivatives.
 \end{proof}

We remark that it is possible to prove a stronger lemma: indeed, for any $X \in \mathfrak{u}(r)$, we have 
\begin{equation} \label{eq:strong_weight_behaviour}
\omega(X)\varphi(v) = \frac{m}{2}  \chi(X) \varphi(v) + \partial \overline{\partial} \tilde{\nu}(X,v),
\end{equation}
where $\tilde{\nu}(X,\cdot)$ is a sum of $(p,p)$-forms ($p \geq 0$) valued in $\mathcal{S}(V_\mathbb{R}^r)$ and satisfying $g^*\tilde{\nu}(X,gv)=\tilde{\nu}(X,v)$. The proof is similar, but replaces \eqref{eq:transg_formula} with a double transgression formula such as the one discussed by Faltings \cite[pp. 62-63]{FaltingsLecturesARR}.

\subsection{Behaviour of $\varphi(v)$ under the lowering operator of $\mathfrak{sl}_{2,\mathbb{C}}$} \label{subsection:behaviour_varphi_lowering}

Recall that every $g \in Mp_{2,\mathbb{R}}$ admits a unique expression $g=n(x)m(y)\tilde{k}_\theta$, where we write $n(x)=\left( \begin{smallmatrix} 1 & x \\ 0 & 1 \end{smallmatrix} \right)$ for $x \in \mathbb{R}$; $m(y)=\left( \begin{smallmatrix} y^{1/2} &  \\  & y^{-1/2} \end{smallmatrix} \right)$ for $y \in \mathbb{R}_{>0}$; and $\tilde{k}_\theta \in \widetilde{U(1)} \cong \mathbb{R}/4\pi \mathbb{Z}$ for $\theta \in [0,4\pi[$. We think of $(x,y,\theta)$ as coordinates on $Mp_{2,\mathbb{R}}$. Let
\begin{equation}
X_- = \tfrac{1}{2} \left( \begin{smallmatrix} 1 & -i \\ -i & -1 \end{smallmatrix}\right) \in \mathfrak{sl}_{2,\mathbb{C}}.
\end{equation}
Thus $X_-$ is the usual lowering operator of $\mathfrak{sl}_{2,\mathbb{C}}$, whose action on $f \in \mathcal{C}^\infty(Mp_{2,\mathbb{R}})$ under the right regular representation is given by
\begin{equation}
X_- \cdot f = -ie^{-2i \theta} \left(2y \frac{d}{d\overline{\tau}} - \frac{1}{2} \frac{\partial}{\partial \theta} \right) f,
\end{equation}
where $\tfrac{d}{d\overline{\tau}}=\tfrac{1}{2}(\tfrac{d}{dx}+i\tfrac{d}{dy})$. To determine the behaviour of the form $\varphi(\cdot)$ under $X_-$, we will use the following double transgression formula. Let $N \in End(\wedge \mathcal{F})$ the operator that acts on $\wedge^k \mathcal{F}$ by multiplication by $-k$.
For $v \in V_\mathbb{R}$, define
\begin{equation} \label{eq:varphi'_def}
\psi^0(v) = tr_s(N e^{\nabla_v^2}) \in \oplus_{p \geq 0} A^{p,p}(\mathbb{D}).
\end{equation}
Then we have (see \cite[Thm. 1.15]{BismutGilletSoule1})
\begin{equation} \label{eq:double_transgression}
-\frac{1}{t} \partial \overline{\partial}\psi^0(t^{1/2}v) = \frac{d}{dt} \varphi^0(t^{1/2}v), \quad t>0.
\end{equation}
Let 
\begin{equation}
\psi(v)=e^{-\pi Q(v,v)} \psi^0(v).
\end{equation}
Using the estimate \eqref{eq:varphi_estimate}, we find that $\psi(\cdot)$ takes values in $\mathcal{S}(V_\mathbb{R})$. Note also that we have $g^*\psi(gv)=\psi(v)$ for every $g \in G$.

\begin{lemma} \label{lemma:lowering_varphi}
Let $\tau=x+iy \in \mathbb{H}$ and write $g_\tau=\left(\left( \begin{smallmatrix} y^{1/2} & xy^{-1/2} \\ & y^{-1/2} \end{smallmatrix} \right),1\right) \in Mp_{2,\mathbb{R}}$. Then
\begin{equation}
-2i y^2 \frac{d}{d\overline{\tau}} (y^{-m/4} \cdot \omega(g_\tau)\varphi(v))= -\partial \overline{\partial} (y^{-(m/4-1)} \omega(g_\tau)\psi(v)).
\end{equation}
\end{lemma}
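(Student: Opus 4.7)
The plan is to use the explicit Weil representation formulas of \textsection\ref{subsection:Weil_representation} to rewrite both sides as differential operators in $\tau$ applied to an expression of the form $e^{i\pi\tau Q(v,v)}\cdot(\text{something}(y^{1/2}v))$, and then reduce the identity to the double transgression formula \eqref{eq:double_transgression} of Bismut--Gillet--Soul\'e.

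First, I would compute $\omega(g_\tau)\varphi(v)$. Since $g_\tau = (n(x)m(y^{1/2}),1)$ with $y^{1/2} > 0$, the Hilbert symbol factor $\chi_{V_\mathbb{R}}(y^{1/2})$ and (in the odd $m$ case) the factor $\gamma_\mathbb{R}(y^{1/2},\tfrac{1}{2}\psi)^{-1}$ are trivial, so the formulas of \textsection\ref{subsection:Weil_representation} give
\begin{equation*}
\omega(g_\tau)\varphi(v) = y^{m/4} e^{i\pi x Q(v,v)} \varphi(y^{1/2}v) = y^{m/4} e^{i\pi \tau Q(v,v)} \varphi^0(y^{1/2}v),
\end{equation*}
where in the last step I used $\varphi(w) = e^{-\pi Q(w,w)}\varphi^0(w)$ applied to $w = y^{1/2}v$ together with $-\pi y Q(v,v) + i\pi x Q(v,v) = i\pi\tau Q(v,v)$. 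The identical computation gives $\omega(g_\tau)\psi(v) = y^{m/4} e^{i\pi\tau Q(v,v)} \psi^0(y^{1/2}v)$.

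Next I would compute the left-hand side. Since $e^{i\pi\tau Q(v,v)}$ is antiholomorphic killed (holomorphic in $\tau$), $\tfrac{d}{d\overline{\tau}}$ only acts through $y$, and $\tfrac{dy}{d\overline{\tau}} = \tfrac{i}{2}$. Thus
\begin{equation*}
\frac{d}{d\overline{\tau}}\bigl(y^{-m/4}\omega(g_\tau)\varphi(v)\bigr) = \frac{d}{d\overline{\tau}}\bigl(e^{i\pi\tau Q(v,v)}\varphi^0(y^{1/2}v)\bigr) = \frac{i}{2}e^{i\pi\tau Q(v,v)}\frac{d}{dy}\varphi^0(y^{1/2}v).
\end{equation*}
Applying \eqref{eq:double_transgression} with $t=y$ converts $\tfrac{d}{dy}\varphi^0(y^{1/2}v)$ into $-\tfrac{1}{y}\partial\overline{\partial}\psi^0(y^{1/2}v)$, and after multiplying by $-2iy^2$ the left-hand side becomes
\begin{equation*}
-y\cdot e^{i\pi\tau Q(v,v)}\,\partial\overline{\partial}\psi^0(y^{1/2}v).
\end{equation*}

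Finally I would simplify the right-hand side. Using the formula for $\omega(g_\tau)\psi(v)$ above,
\begin{equation*}
y^{-(m/4-1)}\omega(g_\tau)\psi(v) = y\cdot e^{i\pi\tau Q(v,v)}\psi^0(y^{1/2}v).
\end{equation*}
Since the prefactor $y\cdot e^{i\pi\tau Q(v,v)}$ depends only on $\tau$ and not on the $\mathbb{D}$-variables, the operator $\partial\overline{\partial}$ (which is on $\mathbb{D}$) commutes with it, yielding exactly the expression just obtained for the left-hand side. Comparing finishes the proof.

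The only potentially delicate step is the bookkeeping around $y$, which enters both as the overall scaling factor $y^{m/4}$ and through the dilation $y^{1/2}v$ inside $\varphi^0,\psi^0$; the key simplification that makes everything line up is the identity $-\pi y Q(v,v) + i\pi x Q(v,v) = i\pi\tau Q(v,v)$, which absorbs the Gaussian prefactor into a $\tau$-holomorphic function and leaves only the $y$-dependence through the rescaled argument, where the double transgression formula \eqref{eq:double_transgression} can be directly applied.
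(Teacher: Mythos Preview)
Your proof is correct and follows essentially the same route as the paper: express $y^{-m/4}\omega(g_\tau)\varphi(v)=e^{i\pi\tau Q(v,v)}\varphi^0(y^{1/2}v)$ (and similarly for $\psi$), observe that the exponential is holomorphic in $\tau$ so that $\tfrac{d}{d\overline{\tau}}$ acts only through $y$, and then invoke the double transgression formula \eqref{eq:double_transgression} with $t=y$. The only cosmetic difference is that the paper writes $Q(v)=\tfrac12 Q(v,v)$ and $e^{2\pi i Q(v)\tau}$ in place of your $e^{i\pi\tau Q(v,v)}$.
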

\begin{proof}
Writing $Q(v)=\tfrac{1}{2}Q(v,v)$, we have
\begin{equation}
y^{-m/4} \cdot \omega(g_\tau)\varphi(v) = e^{2\pi i Q(v) \tau} \cdot \varphi^0(y^{1/2}v)
\end{equation}
and hence, by \eqref{eq:double_transgression},
\begin{equation} \label{eq:lowering_op}
\begin{split}
-2i y^2 \frac{d}{d\overline{\tau}} (y^{-m/4} \cdot \omega(g_\tau)\varphi(v)) &= -2iy^2 e^{2\pi i Q(v) \tau} \frac{d}{d\overline{\tau}} \varphi^0(y^{1/2}v) \\
&=y^2 e^{2\pi i Q(v) \tau} \cdot \frac{d}{dy} \varphi^0(y^{1/2}v) \\
&= -y e^{2\pi i Q(v) \tau} \cdot \partial \overline{\partial} \psi^0(y^{1/2}v) \\
&= -\partial \overline{\partial} (y^{-(m/4-1)} \omega(g_\tau)\psi(v)).
\end{split}
\end{equation}
 
\end{proof}
Together with Proposition~\ref{prop:superconn_Schwartz} and Lemma~\ref{lemma:weight}, this proves \autoref{thm:thm_1}.

\begin{remark}
In the setting of \autoref{section:example_SO_n_2}, one computes that
\begin{equation}
\psi(v)[0]=e^{-\pi Q(v,v)} tr_s(Ne^{\nabla_v^2})[0] = e^{-2\pi h_z(s_v)}
\end{equation}
and hence, writing $v=v_z+v'_z$ with $v_z \in \mathcal{F}_z^2$ and $v'_z \in \overline{\mathcal{F}_z^1}$, we find that
\begin{equation}
\omega(g_\tau)\psi(v)[0](z) = y^{m/4} e^{\pi i Q(v'_z,v'_z)\tau +\pi i Q(v_z,\overline{v_z})\overline{\tau}}
\end{equation}
is the usual Siegel Gaussian $\omega(g_\tau)\varphi_{SG}(v)$. By Proposition~\ref{thm:comparison_varphi_varphi_KM} and \eqref{eq:lowering_op}, we conclude that
\begin{equation}
(2\pi i)^{-1}\partial \overline{\partial} \varphi_{SG} =\omega(X_-) \varphi_{KM}, \quad X_-=\tfrac{1}{2} \left( \begin{smallmatrix} 1 & -i \\ -i & -1 \end{smallmatrix}\right).
\end{equation}
This gives another proof of \cite[Thm. 4.4]{BruinierFunke}.
\end{remark}

\subsection{The currents defined by the forms $\varphi(v_1,\ldots,v_r)$} \label{subsection:Thom_property} We will now study the properties of currents obtained by integration against $\varphi$. For a subspace $U$ of $V_\mathbb{R}$, define $G_U \subset G$ to be the pointwise stabilizer of $U$ and
\begin{equation}
\mathbb{D}_U = \{z \in \mathbb{D}| U \subset \mathcal{F}_z^{w/2}\}.
\end{equation} 
When $U \subset V_\mathbb{Q}$, this is the locus of $z \in \mathbb{D}$ where all classes in $U$ are Hodge classes. It is non-empty only if $U = 0$ or $Q|_{U\times U}$ is positive definite.

\begin{proposition} \label{prop:Thom_property_on_D}
Let $(v_1,\ldots,v_r) \in V_\mathbb{R}^r$ and $U=\langle v_1,\ldots,v_r \rangle$. Assume that $U$ has dimension $r$ and that the restriction of $Q$ to $U$ is positive definite. Let $\eta$ be a compactly supported form on $\mathbb{D}$. Then
\begin{equation}
\lim_{t \to \infty} \int_{\mathbb{D}} \varphi^0(tv_1,\ldots,tv_r) \wedge \eta = (-2\pi i)^{r \cdot rk(\mathcal{F})} \cdot \int_{\mathbb{D}_U} Td(\mathcal{F}^\vee,\nabla)^{-r} \wedge \eta.
\end{equation}
\end{proposition}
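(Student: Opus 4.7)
The plan is to apply the asymptotic result of Bismut \cite{BismutInv90} describing the limit, as $t\to\infty$, of the Chern character currents of Koszul complexes attached to rescaled regular holomorphic sections of hermitian vector bundles. All the essential analytic content is packaged in that theorem; the proof consists of recasting the setup of the proposition in the form required by Bismut's theorem, checking regularity of a single combined section, and matching normalisations.

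First I would combine the holomorphic sections $s_{v_i}\in\Gamma(\mathbb{D},\mathcal{F}^\vee)$ into a single section
\begin{equation*}
s_U := (s_{v_1},\ldots,s_{v_r}) \in \Gamma\bigl(\mathbb{D},(\mathcal{F}^\vee)^{\oplus r}\bigr)
\end{equation*}
of the hermitian bundle $E := (\mathcal{F}^\vee)^{\oplus r}$. By the compatibility formulas of \autoref{appendix} for Koszul complexes and Chern forms of tensor products, the complex $K(v_1,\ldots,v_r)$ with its superconnection $\nabla_{v_1,\ldots,v_r}$ is exactly the Koszul datum attached to $s_U$, and the form $\varphi^0(tv_1,\ldots,tv_r)$ is the Chern character form of the rescaled Koszul complex associated with $ts_U$. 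Under the hypotheses on $U$, the zero locus of $s_U$ is $\mathbb{D}_U$: the argument of \autoref{subsection:definition_s_v} applied to the stabilizer $G_U\cong O(s-r,t)$ acting transitively on $\mathbb{D}_U$ shows that $\mathbb{D}_U$ is a smooth complex submanifold of $\mathbb{D}$ of complex codimension $r\cdot rk(\mathcal{F})=rk(E)$, so $s_U$ is a regular holomorphic section.

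I would then invoke Bismut's theorem. For a regular holomorphic section $s$ of a hermitian holomorphic vector bundle $E$ on a complex manifold $X$ with zero locus $Y$, the Chern character forms of the Koszul complex attached to $ts$ (with the superconnection normalised as in \autoref{subsection:superconnection_nabla_v_def}) converge, as $t\to\infty$, to a current supported on $Y$; concretely, in our conventions,
\begin{equation*}
\lim_{t\to\infty} \mathrm{tr}_s\bigl(e^{\nabla_{ts}^2}\bigr) = (-2\pi i)^{rk(E)} \cdot Td(E,\nabla)^{-1}\big|_Y \wedge \delta_Y
\end{equation*}
as currents on $X$. Applied to $E=(\mathcal{F}^\vee)^{\oplus r}$ and $Y=\mathbb{D}_U$, the multiplicativity of the Todd form gives $Td(E,\nabla)^{-1}=Td(\mathcal{F}^\vee,\nabla)^{-r}$, and pairing both sides with the compactly supported test form $\eta$ produces exactly the stated identity.

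The main obstacle is essentially bookkeeping: one has to match the precise conventions of \cite{BismutInv90} — the normalisation $\nabla+i\sqrt{2\pi}(s_v+s_v^*)$ used in \autoref{subsection:superconnection_nabla_v_def}, the placement of the factor $(-2\pi i)^{rk(E)}$, and whether Bismut's Todd factor is naturally expressed on $E$ or on its dual — with the conventions used throughout this paper. Beyond this bookkeeping, the only substantive input not imported from \cite{BismutInv90} is the regularity of $s_U$, and this follows from the same homogeneity argument used in \autoref{subsection:definition_s_v}, together with the assumption that $Q|_U$ is positive definite (which in particular guarantees that $\mathbb{D}_U$ is nonempty).
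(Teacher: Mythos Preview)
Your overall strategy---reduce to a single regular section of $(\mathcal{F}^\vee)^{\oplus r}$ and invoke Bismut's asymptotic theorem---is exactly the paper's strategy, and your verification of regularity and of the Todd/normalisation factors is fine. However, there is one genuine gap that the paper addresses explicitly and that you have overlooked: the result of \cite{BismutInv90} you are invoking is stated and proved for a \emph{compact} complex manifold, and $\mathbb{D}$ is not compact. You cannot simply apply it to $\mathbb{D}$ as a black box, and this is not a bookkeeping issue.

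The paper's fix is short but essential. The Hodge bundle $\mathcal{F}$ and the sections $s_{v_i}$ extend naturally to the compact dual $\mathbb{D}^\vee$; what does not extend is the Hodge metric. Since $\eta$ has compact support in $\mathbb{D}$, one uses a partition of unity to construct a hermitian metric $\tilde h$ on $\mathcal{F}$ over all of $\mathbb{D}^\vee$ that agrees with the Hodge metric on $\mathrm{Supp}(\eta)$. Bismut's theorem then applies on the compact manifold $\mathbb{D}^\vee$ to the superconnection built from $\tilde h$, giving convergence of currents there; since the modified Chern form coincides with $\varphi^0(tv_1,\ldots,tv_r)$ on $\mathrm{Supp}(\eta)$, pairing with $\eta$ yields the desired limit. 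You should insert this step between your regularity check and the invocation of Bismut's theorem.
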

\begin{proof}
Note that the hypothesis on $U$ implies that $s_v:=(s_{v_1},\ldots,s_{v_r})$ is a regular section of $(\mathcal{F}^{\oplus r})^\vee$; in particular, the Koszul complex $K(v_1,\ldots,v_r)$ is a resolution of $\mathcal{O}_{\mathbb{D}_U}$. We would like to apply the main result of Bismut \cite{BismutInv90} concerning the limit as $t \to \infty$ of Chern character forms, but this result is for superconnections on a compact complex manifold. We argue using the compact dual $\mathbb{D}^\vee$. Recall that the Hodge bundle $\mathcal{F}$ and section $s_v$ are naturally defined on $\mathbb{D}^\vee$. Using a partition of unity, we construct a hermitian metric $\tilde{h}$ on $\mathcal{F}$ over $\mathbb{D}^\vee$ whose restriction to the support of $\eta$ equals the Hodge metric. Let $\tilde{\nabla}$ be the Chern connection on $(\wedge (\mathcal{F}^{\oplus r}),\tilde{h})$ and consider the superconnection $\tilde{\nabla}_v=\tilde{\nabla}+i \sqrt{2\pi}(s_v+s_v^*)$. By \cite[Thm 3.2]{BismutInv90}, we have
\begin{equation}
\lim_{t \to \infty} tr_s(e^{\tilde{\nabla}_{tv}^2}) = (-2\pi i)^{r \cdot rk(\mathcal{F})} \cdot Td(\mathcal{F}^\vee,\tilde{\nabla})^{-r} \delta_{\mathbb{D}_U}
\end{equation}
as currents on $\mathbb{D}^\vee$, and this implies the result since the restriction of $tr_s(e^{\tilde{\nabla}_{tv}^2})$ to the support of $\eta$ equals $\varphi^0(tv_1,\ldots,tv_r)$.
 \end{proof}

The next proposition is the main ingredient in the proof of Theorem~\ref{thm:Four_expansion}. It shows that $\varphi^0(v_1,\ldots,v_r)$ has a Thom form property in certain quotients of $\mathbb{D}$.

\begin{proposition} \label{prop:Thom_property}
Let $L \subset V_\mathbb{Q}$ be a lattice and $\Gamma$ be a torsion-free subgroup contained in $Stab_{G^0}(L)$. Let $U=\langle v_1,\ldots,v_r \rangle$ be a positive definite subspace of $V$ of dimension $0 \leq r' \leq r$ and write $\Gamma_U = \Gamma \cap G_U^0$ and $\mathbb{D}_U^+=\mathbb{D}_U \cap \mathbb{D}^+$. Let $\eta$ be a closed and compactly supported form on $\Gamma \backslash \mathbb{D}^+$. Then
\begin{equation} \label{eq:Thom_property}
\int_{\Gamma_U \backslash \mathbb{D}^+} \varphi^0(v_1,\ldots,v_r) \wedge \eta = (-2\pi i)^{r' \cdot rk(\mathcal{F})} \int_{\Gamma_U \backslash \mathbb{D}_U^+} ch(\wedge \mathcal{F},\nabla)^{r-r'} \wedge Td(\mathcal{F}^\vee,\nabla)^{-r'} \wedge \eta.
\end{equation}
\end{proposition}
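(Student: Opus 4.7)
My plan is to reduce the general statement to the regular-section case of \autoref{prop:Thom_property_on_D} via two successive Chern--Weil transgression arguments. After reordering, assume that $v_1,\ldots,v_{r'}$ form a basis of $U$ while $v_{r'+1},\ldots,v_r \in U$. Stripping the Gaussian factor from \eqref{eq:varphi_wedge} gives the factorization $\varphi^0(v_1,\ldots,v_r)=\varphi^0(v_1)\wedge\cdots\wedge\varphi^0(v_r)$. For each redundant $j>r'$, transgression in $t$ applied to $\nabla_{tv_j}=\nabla+it\sqrt{2\pi}(s_{v_j}+s_{v_j}^*)$ yields
\[
\varphi^0(v_j)=\varphi^0(0)+d\beta_j,\qquad \beta_j=i\sqrt{2\pi}\int_0^1 tr_s\bigl((s_{v_j}+s_{v_j}^*)\,e^{\nabla_{tv_j}^2}\bigr)\,dt,
\]
and $\varphi^0(0)=ch(\wedge\mathcal{F},\nabla)$ by (the $Td$-free part of) Proposition~\ref{prop:phi_main_properties}.(d). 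Substituting this in each redundant slot, closedness of the surviving factors and of $\eta$ makes the $d\beta_j$ terms exact; subject to justifying Stokes on the non-compact quotient, I am left with
\[
\int_{\Gamma_U\backslash\mathbb{D}^+}\varphi^0(v_1,\ldots,v_r)\wedge\eta=\int_{\Gamma_U\backslash\mathbb{D}^+}\varphi^0(v_1,\ldots,v_{r'})\wedge ch(\wedge\mathcal{F},\nabla)^{r-r'}\wedge\eta.
\]

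Because $v_1,\ldots,v_{r'}$ is a basis of $U$, the section $(s_{v_1},\ldots,s_{v_{r'}})$ of $(\mathcal{F}^{\oplus r'})^\vee$ is regular with vanishing locus $\mathbb{D}_U$. I would then introduce
\[
I(t):=\int_{\Gamma_U\backslash\mathbb{D}^+}\varphi^0(tv_1,\ldots,tv_{r'})\wedge ch(\wedge\mathcal{F},\nabla)^{r-r'}\wedge\eta,\qquad t>0,
\]
and argue that $I(t)$ is independent of $t$ by the same transgression-plus-closedness mechanism. Letting $t\to\infty$ and running the argument of \autoref{prop:Thom_property_on_D} (working on the compact dual $\mathbb{D}^\vee$ with a metric matching the Hodge metric near the support of $\eta$ and invoking \cite[Thm 3.2]{BismutInv90}), $\varphi^0(tv_1,\ldots,tv_{r'})$ converges as a current to $(-2\pi i)^{r'\cdot rk(\mathcal{F})}\,Td(\mathcal{F}^\vee,\nabla)^{-r'}\,\delta_{\mathbb{D}_U^+}$. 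Evaluating $\lim_{t\to\infty}I(t)$ delivers exactly the right-hand side of \eqref{eq:Thom_property}.

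The main obstacle is the analytic justification of Stokes and of the $t\to\infty$ limit on $\Gamma_U\backslash\mathbb{D}^+$, which is not compact. The pullback of $\eta$ is supported on infinitely many $\Gamma_U\backslash\Gamma$-translates of a fundamental domain $\mathcal{D}\subset\mathbb{D}^+$ for $\Gamma$, so after unfolding the integral becomes
\[
\sum_{\gamma\in\Gamma_U\backslash\Gamma}\int_{\mathcal{D}}\varphi^0(\gamma^{-1}v_1,\ldots,\gamma^{-1}v_r)\wedge\eta.
\]
I would establish absolute convergence by writing $\varphi^0=e^{\pi\sum Q(v_i,v_i)}\varphi$ and appealing to the rapid-decay estimate \eqref{eq:varphi_estimate}: since $Q(\gamma^{-1}v_i,\gamma^{-1}v_i)=Q(v_i,v_i)$ is $\gamma$-independent while $q_K(\gamma^{-1}v_i)$ grows as $\gamma$ escapes any compact set, the Gaussian factor forces summability. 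A Duhamel bound of the type derived in \autoref{subsection:upper_bound_Duhamel}, applied to $\beta_j$ and to the $t$-transgression primitive, then controls the boundary terms in any compact exhaustion of $\Gamma_U\backslash\mathbb{D}^+$, so that no contribution from infinity survives. This Schwartz-decay bookkeeping is the analytic heart of the argument.
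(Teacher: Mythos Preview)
Your overall architecture---transgress to show $t$-independence, then pass to the limit via Bismut---matches the paper, but two points deserve comment.

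\textbf{The reduction from $r$ to $r'$.} Your transgression of each redundant $\varphi^0(v_j)$ to $\varphi^0(0)=ch(\wedge\mathcal{F},\nabla)$ is correct, but it saddles you with an extra Stokes argument on the non-compact quotient for each of the $r-r'$ primitives $\beta_j$. The paper avoids this entirely: by Proposition~\ref{prop:phi_main_properties}.(e), one may act by $h\in SO(r)$ to diagonalize the Gram matrix $(Q(v_i,v_j))$. Since $Q|_U$ is positive definite and $\dim U=r'$, exactly $r-r'$ of the rotated vectors are then \emph{literally zero}, so the $ch(\wedge\mathcal{F},\nabla)^{r-r'}$ factor appears with no transgression and no Stokes at all. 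This is a cleaner reduction and sidesteps the growth analysis of your $\beta_j$.

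\textbf{The $t\to\infty$ step.} Here there is a genuine gap. You invoke the argument of Proposition~\ref{prop:Thom_property_on_D} ``near the support of $\eta$'', but the pullback of $\eta$ to $\Gamma_U\backslash\mathbb{D}^+$ has \emph{non-compact} support, so you cannot simply extend the Hodge metric to $\mathbb{D}^\vee$ and quote Bismut. Your final paragraph correctly unfolds to a sum over $\Gamma_U\backslash\Gamma$ and argues summability, but summability for each fixed $t$ is not enough: you must interchange the limit $t\to\infty$ with that infinite sum. The paper does this by covering $\mathrm{Supp}(\eta)\subset\Gamma\backslash\mathbb{D}^+$ with finitely many contractible opens, lifting each via a section to a relatively compact $V_\alpha\subset\mathbb{D}^+$, and then splitting $\Gamma_U\backslash\Gamma=S_1\sqcup S_2$ with $S_1=\{\gamma: h_z(\gamma^{-1}v_1,\ldots,\gamma^{-1}v_{r'})\le 1 \text{ for some }z\in V_\alpha\}$ finite. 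On $S_2$ the Gaussian factor is bounded above by $e^{-2\pi t^2}$ times a summable quantity, so dominated convergence kills that piece as $t\to\infty$; on the finite $S_1$ one applies Proposition~\ref{prop:Thom_property_on_D} termwise. Your sketch gestures at the decay but does not supply this finite/infinite dichotomy, which is what actually justifies the interchange.
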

\begin{proof} By Proposition~\ref{prop:phi_main_properties}.(e), the identity \eqref{eq:Thom_property} does not change if we replace $(v_1,\ldots,v_r)$ by $(v_1,\ldots,v_r) \cdot h$ with $h \in SO(r)$. Hence we can assume that $T(v_1,\ldots,v_r)$ is diagonal and, since $\varphi^0(0)=ch(\wedge \mathcal{F},\nabla)$, this reduces the proof to the case $r=r'$. 
Note first that the integral on the left hand side converges: by the estimate \eqref{eq:varphi_estimate}, the sum
\begin{equation}
f^0(v_1,\ldots,v_r):=\sum_{\gamma \in \Gamma_U \backslash \Gamma} \gamma^*\varphi^0(v_1,\ldots,v_r)
\end{equation}
converges and defines a smooth form on $\Gamma \backslash \mathbb{D}^+$, and the integrability follows from Fubini's theorem. Next we claim that the integral
\begin{equation}
I(t,\eta):=\int_{\Gamma_U \backslash \mathbb{D}^+} \varphi^0(tv_1,\ldots,tv_r) \wedge \eta, \quad t >0,
\end{equation}
is independent of $t$. Namely, given $t_2 > t_1>0$, the transgression formula \eqref{eq:transg_formula} shows that $\varphi^0(t_2v_1,\ldots,t_2v_r)-\varphi^0(t_1v_1,\ldots,t_1v_r)=d\psi(v_1,\ldots,v_r)$, with
\begin{equation}
\psi(v_1,\ldots,v_r)=\int_{t_1}^{t_2} tr_s(\sum_{1 \leq i \leq r}(s_{v_i}+s_{v_i}^*)e^{\nabla_{tv_1,\ldots,tv_r}^2}) dt.
\end{equation}
Applying \eqref{eq:varphi_estimate} again, we find that
\begin{equation}
\begin{split}
\int_{\Gamma_U \backslash \mathbb{D}^+} d \psi(v_1,\ldots,v_r) \wedge \eta &= \int_{\Gamma \backslash \mathbb{D}^+} \sum_{\gamma \in \Gamma_U \backslash \Gamma} \gamma^*d \psi(v_1,\ldots,v_r) \wedge \eta \\
&= \int_{\Gamma \backslash \mathbb{D}^+} d \left(\sum_{\gamma \in \Gamma_U \backslash \Gamma} \gamma^* \psi(v_1,\ldots,v_r) \wedge \eta \right) = 0.
\end{split}
\end{equation}
Hence it suffices to show that
\begin{equation} \label{eq:Thom_property_1}
\lim_{t \to \infty} I(t,\eta) = (-2\pi i)^{r \cdot rk(\mathcal{F})} \cdot \int_{\Gamma_U \backslash \mathbb{D}_U^+} Td(\mathcal{F}^\vee,\nabla)^{-r} \wedge \eta.
\end{equation}
Let $(U_\alpha)_{\alpha \in I}$ be a finite open cover of $Supp(\eta)$ with $U_\alpha$ contractible and, for each $\alpha \in I$, pick $V_\alpha \subset \mathbb{D}^+$ such that the quotient map $\pi : \mathbb{D}^+ \to \Gamma \backslash \mathbb{D}^+$ induces an isomorphism $U_\alpha \cong V_\alpha$. Choose a partition of unity $\{\rho_{U_\alpha}| {\alpha \in I}\} \cup \{\rho_\infty\}$ subordinate to the open cover $(U_\alpha)_{\alpha \in I} \cup (\Gamma \backslash \mathbb{D}^+- Supp(\eta))$ and write $\eta = \Sigma_{\alpha \in I} \rho_{U_\alpha}\eta$. Setting $\rho_{V_\alpha} = \pi^*(\rho_{U_\alpha}) \cdot 1_{V_\alpha} \in A_c^0(\mathbb{D}^+)$, we find that
\begin{equation}
\begin{split}
I(t,\eta) &= \int_{\Gamma \backslash \mathbb{D}^+} f^0(tv_1,\ldots,tv_r) \wedge \eta \\
&= \sum_{\alpha \in I} \int_{U_\alpha} f^0(tv_1,\ldots,tv_r) \wedge \rho_{U_\alpha} \eta \\
&= \sum_{\alpha \in I} \int_{\mathbb{D}^+} f^0(tv_1,\ldots,tv_r) \wedge \rho_{V_\alpha} \pi^*\eta.
\end{split}
\end{equation}
For fixed $\alpha \in I$, write $\Gamma_U \backslash \Gamma=S_1 \sqcup S_2$ (disjoint union), with
\begin{equation}
S_1=\{\gamma \in \Gamma_U \backslash \Gamma | h_z(\gamma^{-1} v_1,\ldots,\gamma^{-1} v_r) \leq 1, \text{ for some } z \in V_\alpha \};
\end{equation}
then $S_1$ is a finite set since the form $q_z(v)$ in \eqref{eq:def_q_z} is positive definite. The estimate \eqref{eq:varphi_estimate} shows that
\begin{equation}
\lim_{t \to \infty} \int_{\mathbb{D}^+} \sum_{\gamma \in S_2} \gamma^* \varphi^0(tv_1,\ldots,tv_r) \wedge \rho_{V_\alpha} \pi^*\eta = 0
\end{equation}
by dominated convergence. For the sum over $S_1$, Proposition~\ref{prop:Thom_property_on_D} gives
\begin{equation}
\begin{split}
\lim_{t \to \infty} \sum_{\gamma \in S_1} &\int_{\mathbb{D}^+}  \varphi^0 (t\gamma^{-1}v_1,\ldots,t\gamma^{-1}v_r) \wedge \rho_{V_\alpha} \pi^*\eta \\
&=  (-2\pi i)^{r \cdot rk(\mathcal{F})}\sum_{\gamma \in S_1} \int_{\mathbb{D}^+_{\gamma^{-1}U}} Td(\mathcal{F}^\vee,\nabla)^{-r} \wedge \rho_{V_\alpha} \pi^*\eta \\
&=  (-2\pi i)^{r \cdot rk(\mathcal{F})}\sum_{\gamma \in \Gamma_U \backslash \Gamma} \int_{\mathbb{D}^+_{\gamma^{-1}U}} Td(\mathcal{F}^\vee,\nabla)^{-r} \wedge \rho_{V_\alpha} \pi^*\eta \\
&= (-2\pi i)^{r \cdot rk(\mathcal{F})} \sum_{\gamma \in \Gamma_U \backslash \Gamma} \int_{\mathbb{D}^+_U} Td(\mathcal{F}^\vee,\nabla)^{-r} \wedge (\gamma^{-1})^* \rho_{V_\alpha} \pi^*\eta \\
&=(-2\pi i)^{r \cdot rk(\mathcal{F})} \sum_{\gamma \in \Gamma_U \backslash \Gamma} \int_{\Gamma_U \backslash \mathbb{D}^+_U} Td(\mathcal{F}^\vee,\nabla)^{-r} \wedge \left( \sum_{\gamma' \in \Gamma_U} (\gamma')^*(\gamma^{-1})^* \rho_{V_\alpha} \right) \eta \\
&=(-2\pi i)^{r \cdot rk(\mathcal{F})} \int_{\Gamma_U \backslash \mathbb{D}^+_U} Td(\mathcal{F}^\vee,\nabla)^{-r} \wedge \left( \sum_{\gamma \in \Gamma} \gamma^* \rho_{V_\alpha} \right) \eta \\
&=(-2\pi i)^{r \cdot rk(\mathcal{F})} \int_{\Gamma_U \backslash \mathbb{D}^+_U} Td(\mathcal{F}^\vee,\nabla)^{-r} \wedge \rho_{U_\alpha} \eta
\end{split}
\end{equation}
and \eqref{eq:Thom_property_1} follows by summing over $\alpha \in I$.
 \end{proof}

\section{Hodge loci in arithmetic quotients of period domains and Siegel modular forms}

We now consider quotients $\Gamma \backslash \mathbb{D}^+$ of period domains by arithmetic groups $\Gamma \subset G^0$. After defining special cycles in \textsection \ref{subsection:special_cycles}, we introduce theta series valued in the space of differential forms $A^*(\Gamma \backslash \mathbb{D}^+)$ by summing the forms $\varphi(v)$ for $v$ varying in an appropriate lattice (\textsection \ref{subsection:theta_series_def}). Using the results of \autoref{section:Properties_of_varphi}, we show that the resulting differential forms are closed, and that their cohomology classes define Siegel modular forms (\autoref{thm:modularity_period_domains}). In \textsection \ref{subsection:theta_series_Four_exp}, we compute the Fourier expansion of these modular forms in terms of the special cycles mentioned above (\autoref{thm:Four_expansion}).

\subsection{Special cycles in arithmetic quotients of period domains} \label{subsection:special_cycles} Let $L$ be an even integral lattice in $V_\mathbb{Q}$ and denote by $L^\vee \supset L$ its dual lattice. The arithmetic group
\begin{equation}
\Gamma_L := \{\gamma \in G^0 \ | \ \gamma(L)=L, \ \gamma|_{L^\vee/L} = id \}
\end{equation}
acts properly discontinuously on $\mathbb{D}$ by holomorphic transformations, and we write $X_L:=\Gamma_L \backslash \mathbb{D}^+$ for the quotient. We assume that $\Gamma_L$ is neat. Given a subspace $\langle v_1,\ldots,v_r \rangle$ of $V_\mathbb{Q}$, we write $\Gamma_{L,{\langle v_1,\ldots, v_r \rangle}}$ for the pointwise stabilizer of $\langle v_1,\ldots,v_r \rangle$ in $\Gamma_L$. The group $\Gamma_{L,\langle v_1,\ldots,v_r \rangle}$ stabilizes $\mathbb{D}_{\langle v_1,\ldots,v_r \rangle}^+=\mathbb{D}_{\langle v_1,\ldots,v_r \rangle} \cap \mathbb{D}^+$. This yields a commutative diagram
\begin{equation}
\xymatrix{ \mathbb{D}^+_{\langle v_1,\ldots, v_r \rangle} \ar[r] \ar[d] & \mathbb{D}^+ \ar[d] \\ \Gamma_{L,{\langle v_1,\ldots, v_r \rangle}} \backslash \mathbb{D}^+_{\langle v_1,\ldots, v_r \rangle} \ar[r]  & X_L }
\end{equation}
where all the maps are holomorphic and the horizontal ones are proper.
Define 
\begin{equation}
Hdg(v_1,\ldots,v_r)_L := [\Gamma_{L,{\langle v_1,\ldots, v_r \rangle}} \backslash \mathbb{D}^+_{\langle v_1,\ldots, v_r \rangle}] \in \mathrm{H}^{2*}(X_L)
\end{equation}
to be the cycle class of $\Gamma_{L,{\langle v_1,\ldots, v_r \rangle}} \backslash \mathbb{D}^+_{\langle v_1,\ldots, v_r \rangle}$ in the singular cohomology of $X_L$.

The cycle $Hdg(v_1,\ldots,v_r)_L$ only depends on the orbit of $\langle v_1,\ldots, v_r \rangle$ under $\Gamma_L$. Following Kudla \cite{KudlaOrthogonal}, we now define certain sums of the classes $Hdg(v_1,\ldots,v_r)_L$ as follows. Fix a symmetric matrix $T \in Sym_{r}(\mathbb{Q})$ and a class $\mu + L^r \in (L^\vee /L)^r$ and consider the set 
\begin{equation}
L(T,\mu)=\{(v_1,\ldots,v_r) \in \mu+L^r \ | \ Q(v_i,v_j)=2T \}.
\end{equation}
The group $\Gamma_L$ acts on $L(T,\mu)$ with finitely many orbits. We define
\begin{equation}
Hdg(T,\mu) = \sum_{(v_1,\ldots,v_r) \in \Gamma_L \backslash L(T,\mu)} Hdg(v_1,\ldots,v_r)_L \in H^{2*}(X_L).
\end{equation}
The points in the support of $Hdg(T,\mu)$ correspond to $\Gamma_L$-orbits of Hodge structures on $V_\mathbb{Q}$ containing Hodge classes $v_1,\ldots,v_r$ such that $(v_1,\ldots,v_r) \in \mu + L^r$ and $Q(v_i,v_j)=2T$. In particular, we have $Hdg(T,\mu)=0$ unless the quadratic form $T$ is represented by $(V_\mathbb{Q},Q)$ and (by the second Riemann bilinear relation) the matrix $T$ is positive semidefinite.

\subsection{Theta series and differential forms on $X_L$} \label{subsection:theta_series_def} Let $g= (n(x)m(a),1) \in P$. By Proposition~\ref{prop:phi_main_properties}.(e), the form $\omega(g)\varphi(v_1,\ldots,v_r)$ depends only on the image $\tau = g \cdot i$ in the Siegel space $\mathfrak{H}_r$ of genus $r$. We denote this value by $\omega(g_\tau)\varphi(v_1,\ldots,v_r)$.

\begin{definition}
Let $r$ be a positive integer and $\mu \in (L^\vee/L)^r$. For $\tau=x+iy \in \mathfrak{H}_r$, define
\begin{equation}
\theta(\tau ; \mu+L^r) = \det(y)^{-m/4} \sum_{(v_1,\ldots,v_r) \in \mu + L^r} \omega(g_\tau)\varphi(v_1,\ldots,v_r).
\end{equation}
\end{definition}
Note that the sum and all its partial derivatives converge normally by Proposition~\ref{prop:superconn_Schwartz} and the estimates before it. Hence it defines a smooth form
\begin{equation}
\theta(\tau ; \mu+L^r) \in \oplus_{p \geq 0} A^{p,p}(X_L)
\end{equation}
that is closed by Proposition~\ref{prop:phi_main_properties}. We denote by
\begin{equation}
[\theta(\tau ; \mu+L^r)] \in H^{2*}(X_L)
\end{equation}
its cohomology class. 

Let $Mp_{2r,\mathbb{Z}}$ be the inverse image of $Sp_{2r}(\mathbb{Z})$ under the double cover $Mp_{2r,\mathbb{R}} \to Sp_{2r}(\mathbb{R})$. 
We will prove shortly that $[\theta(\cdot ; \mu+L^r)]$ is a holomorphic Siegel modular form of weight $m/2$ and some level $\Gamma \subset Mp_{2r,\mathbb{Z}}$. A more precise result may be proved by combining all the series $\theta(\cdot ; \mu+L^r)$ for varying $\mu$ in a theta series 
\begin{equation}
\theta_{L^r}: \mathfrak{H}_r \to A^*(X_L) \otimes \mathbb{C}[(L^\vee/L)^r]^\vee
\end{equation}
valued in the dual of the group algebra $\mathbb{C}[(L^\vee/L)^r]$, defined by $\theta_{L^r}(\mu) = \theta(\cdot;\mu+L^r)$.

Let us recall the action of the group $Mp_{2r,\mathbb{Z}}$ on the group algebra $\mathbb{C}[(L^\vee/L)^r]$ via the finite Weil representation $\rho_L$. For this and for the proof of \autoref{thm:modularity_period_domains}, it is convenient to work with adeles. 

We write $Mp_{2r,\mathbb{A}}$ for the metaplectic double cover of $Sp_{2r}(\mathbb{A})$ and $K'$ for the inverse image in $Mp_{2r,\mathbb{A}}$ of $Sp_{2r}(\hat{\mathbb{Z}})$. There is a canonical splitting $Sp_{2r}(\mathbb{Q}) \to Mp_{2r,\mathbb{A}}$ of the double cover $Mp_{2r,\mathbb{A}} \to Sp_{2r}(\mathbb{A})$; identifying $Sp_{2r}(\mathbb{Q})$ with its image in $Mp_{2r,\mathbb{A}}$, we have
\begin{equation}
Sp_{2r}(\mathbb{Q}) \cap (K'Mp_{2r,\mathbb{R}}) = Sp_{2r}(\mathbb{Z}).
\end{equation}
We obtain a homomorphism $Mp_{2r,\mathbb{Z}} \to K'$ sending $\gamma$ to $\hat{\gamma}$, where $\hat{\gamma}$ is the unique element of $K'$ such that $\gamma \hat{\gamma}$ is in the image of $Sp_{2r}(\mathbb{Q})$.

Denote by $\omega=\omega_\psi$ the Weil representation of $Mp_{2r,\mathbb{A}}$ on $\mathcal{S}(V(\mathbb{A})^r)$ attached to the standard additive character $\psi:\mathbb{A} \to \mathbb{C}^\times$ with $\psi_\infty(x)=e^{2\pi ix}$. We realize the group algebra $\mathbb{C}[(L^\vee/L)^r]$ as the subspace $\mathcal{S}_{L^r}$ of Schwartz functions in $\mathcal{S}(V(\mathbb{A}_f)^r)$ that are supported on $(L^\vee)^r \otimes \hat{\mathbb{Z}}$ and are constant on cosets of $L^r \otimes \hat{\mathbb{Z}}$. Then $\mathcal{S}_{L^r}$ is stable under the action of $K'$, and restriction to $Mp_{2r,\mathbb{Z}}$ via the above splitting defines the representation $\rho_L$.

We can now consider Siegel modular forms valued in the dual representation $\rho^{\vee}_L$. Given $\gamma=\left[\left( \begin{smallmatrix} a & b \\ c & d \end{smallmatrix} \right),\pm 1 \right] \in Mp_{2r,\mathbb{Z}}$ and $\tau \in \mathfrak{H}_r$, choose $g_\tau, g_{\gamma \tau} \in P$ such that $g_\tau \cdot i = \tau$ and $g_{\gamma \tau} \cdot i=\gamma \tau$; then we have
\begin{equation}
\gamma g_\tau = g_{\gamma \tau} k(\gamma,\tau),
\end{equation}
where $k(\gamma,\tau) \in \widetilde{U(r)}$ and the value of $\det(k(\gamma,\tau))^{1/2}$ is independent of the choices of $g_\tau$, $g_{\gamma \tau}$. For $m$ a positive integer, define an automorphy factor
\begin{equation}
j_{m/2}:Mp_{2r,\mathbb{Z}} \times \mathfrak{H}_r \to \mathbb{C}^\times, \quad j_{m/2}(\gamma,\tau) = \det(k(\gamma,\tau))^{m/2} \cdot |c\tau + d|^{m/2}.
\end{equation}
We say that a holomorphic function $f: \mathfrak{H}_r \to \mathbb{C}[(L^{\vee}/L)^r]^\vee$ is a Siegel modular form of weight $m/2$ and level $1$ if
\begin{equation}
f(\gamma \tau) = j_{m/2}(\gamma,\tau) \cdot \rho^{\vee}_L(\gamma) f(\tau), \quad \gamma=\left[\left( \begin{smallmatrix} a & b \\ c & d \end{smallmatrix} \right),\pm 1 \right] \in Mp_{2r,\mathbb{Z}}, \quad \tau \in \mathfrak{H}_r.
\end{equation}
(When $r=1$ we also require that $f$ be holomorphic at the cusp $i \infty$.) We denote the space of such modular forms by $M^{(r)}_{m/2,\rho_L}$.

\begin{theorem} \label{thm:modularity_period_domains} The cohomology class $[\theta_{L^r}]$ belongs to $M^{(r)}_{m/2,\rho_L} \otimes H^{2*}(X_L)$.
\end{theorem}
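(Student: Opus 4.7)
The strategy is to realize $\theta_{L^r}$ as an adelic theta kernel attached to $\varphi$, deduce left $Sp_{2r}(\mathbb{Q})$-invariance from Weil's Poisson summation formula, and then extract the $Mp_{2r,\mathbb{Z}}$-modularity of the cohomology class via the splitting $\gamma\mapsto\hat{\gamma}$, combined with the infinitesimal weight and lowering-operator results of Section~\ref{section:Properties_of_varphi}.

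\medskip

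For each $\mu\in (L^\vee/L)^r$ let $\varphi_f^\mu\in\mathcal{S}_{L^r}$ be the characteristic function of $(\mu+L^r)\otimes\hat{\mathbb{Z}}$ and pick $g_\tau\in Mp_{2r,\mathbb{R}}$ with $g_\tau\cdot i=\tau$; then
\begin{equation*}
\theta(\tau;\mu+L^r) \;=\; \det(y)^{-m/4}\sum_{v\in V(\mathbb{Q})^r}[\omega(g_\tau)\varphi](v)\cdot\varphi_f^\mu(v).
\end{equation*}
Applied coefficient by coefficient of $\varphi\in\wedge T^*\mathbb{D}\otimes\mathcal{S}(V_\mathbb{R}^r)$, Weil's theorem asserts that the adelic theta kernel $\Theta(g,\Phi):=\sum_{v\in V(\mathbb{Q})^r}[\omega(g)\Phi](v)$ attached to $\Phi=\varphi\otimes\Phi_f\in\mathcal{S}(V(\mathbb{A})^r)$ is left-invariant under $Sp_{2r}(\mathbb{Q})\subset Mp_{2r,\mathbb{A}}$. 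Given $\gamma\in Mp_{2r,\mathbb{Z}}$, apply this invariance to $\alpha=\gamma\hat{\gamma}\in Sp_{2r}(\mathbb{Q})$ acting on $(g_\tau,1)$, write $\gamma g_\tau=g_{\gamma\tau}k(\gamma,\tau)$ with $k(\gamma,\tau)\in\widetilde{U(r)}$, and use the scaling $\det(\mathrm{Im}(\gamma\tau))^{-m/4}=\det(y)^{-m/4}|\det(c\tau+d)|^{m/2}$ together with the expansion of $\rho_L(\hat{\gamma})\varphi_f^\mu$ in the basis $\{\varphi_f^{\mu'}\}$. The transformation law then reduces to the cohomological identity
\begin{equation*}
[\omega(k)\varphi] \;=\; \det(k)^{m/2}\cdot[\varphi],\qquad k\in\widetilde{U(r)}.
\end{equation*}
Its infinitesimal version is Lemma~\ref{lemma:weight}, whose proof extends from the Cartan $\mathfrak{h}\subset\mathfrak{u}(r)$ to all of $\mathfrak{u}(r)$ via Proposition~\ref{prop:phi_main_properties}(e); since $\widetilde{U(r)}$ is connected, integration along a path promotes this to a global identity modulo an exact, Schwartz-valued form, which descends to an exact form on $X_L$ after summation over $\mu+L^r$ by the derivative estimates following \eqref{eq:varphi_estimate}.

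\medskip

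For holomorphicity, it suffices to show that every lowering operator in $\mathfrak{p}_-\subset\mathfrak{sp}_{2r,\mathbb{C}}$ annihilates $[\theta_{L^r}]$. The diagonal lowering operator $X_{-,11}$ acts on only the first tensor factor of \eqref{eq:varphi_wedge}, where by Lemma~\ref{lemma:lowering_varphi} its image on $\varphi(v_1)$ is $\partial\bar{\partial}$ of a Schwartz-valued primitive; after the usual lattice summation this becomes exact on $X_L$. The adjoint $U(r)$-orbit of $X_{-,11}$ spans $\mathfrak{p}_-$ (as $U(r)$ acts transitively on the rank-one elements of $\mathrm{Sym}_r(\mathbb{C})$ up to scaling), and a direct conjugation argument — combining $[\omega(X_{-,11})\varphi]=0$ with the established $\widetilde{U(r)}$-equivariance — propagates the annihilation to all of $\mathfrak{p}_-$.

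\medskip

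The main technical obstacle lies in this descent step: converting pointwise exactness of a Schwartz-valued form on $\mathbb{D}$ into genuine exactness of the associated form on $X_L$. One must verify that the primitives produced in Lemmas~\ref{lemma:weight} and~\ref{lemma:lowering_varphi}, together with all their derivatives in $v$, satisfy rapid-decay bounds analogous to \eqref{eq:varphi_estimate}; given the Duhamel expansions of Section~\ref{subsection:upper_bound_Duhamel}, this is routine but is the only place where the Schwartz nature of $\varphi$ is essential. Once the descent is in place, the remainder of the proof is standard manipulation of the adelic Weil representation.
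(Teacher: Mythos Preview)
Your proposal is correct and follows essentially the same route as the paper: adelic theta kernel, Poisson summation for $Sp_{2r}(\mathbb{Q})$-invariance, Lemma~\ref{lemma:weight} for the $\widetilde{U(r)}$-weight (hence the automorphy factor and the $\rho_L$-transformation via $\gamma\mapsto\hat\gamma$), and Lemma~\ref{lemma:lowering_varphi} plus the $SO(r)$/$U(r)$-conjugation of Proposition~\ref{prop:phi_main_properties}(e) for annihilation by $\mathfrak{p}_-$. The only item you omit is the holomorphicity at the cusp when $r=1$, which the paper checks in one line from the moderate-growth estimates \eqref{eq:varphi_estimate}; otherwise your sketch matches the paper's argument step for step.
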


\begin{proof}
For $\varphi_f \in \mathcal{S}(V(\mathbb{A}_f)^r)$ and $g_\mathbb{A}=(g_f,g_\infty) \in Mp_{2r,\mathbb{A}}$, consider the theta function
\begin{equation}
\theta(g_\mathbb{A};\varphi_f) = \sum_{v \in V(\mathbb{Q})^r} \omega(g_f,\cdot)\varphi_f(v) \cdot \omega(g_\infty)\varphi(v) \in \oplus_{p \geq 0} A^{p,p}(G(\mathbb{Q}) \backslash (G(\mathbb{A}_f) \times \mathbb{D}))
\end{equation}
and denote by $[\theta(g_\mathbb{A};\varphi_f)]$ its cohomology class. By Poisson summation we have
\begin{equation}
\theta(\gamma g_\mathbb{A};\varphi_f) = \theta(g_\mathbb{A};\varphi_f), \ \gamma \in Sp_{2r}(\mathbb{Q})
\end{equation}
(see e.g. Tate's thesis, Lemma 4.2.4.). Moreover, by Lemma~\ref{lemma:weight} we have
\begin{equation}
r(X)\theta(g_\mathbb{A};\varphi_f) = \frac{m}{2}\chi(X)\theta(g_\mathbb{A};\varphi_f) + d\theta_{\nu(X)}(g_\mathbb{A};\varphi_f)
\end{equation}
for any $X \in \mathfrak{u}(r)$, where
\begin{equation}
\theta_{\nu(X)}(g_\mathbb{A};\varphi_f):= \sum_{v \in V(\mathbb{Q})^r} \omega(g_f,\cdot)\varphi_f(v) \cdot \omega(g_\infty)\nu(X,v) \in A^*(G(\mathbb{Q}) \backslash (G(\mathbb{A}_f) \times \mathbb{D})).
\end{equation}
Taking cohomology classes, we conclude that
\begin{equation}
[\theta(g_\mathbb{A} k;\varphi_f)] = \det(k)^{m/2} \cdot [\theta(g_\mathbb{A};\varphi_f)], \ k \in \widetilde{U(r)}.
\end{equation}
Next we check that $[\theta(\cdot;\varphi_f)]$ is holomorphic. Let
\begin{equation}
\mathfrak{p}_{\pm} = \left\{ \left. p_{\pm}(X) = \frac{1}{2} \left( \begin{array}{cc} X & \pm iX \\ \pm iX & -X \end{array} \right) \right| X = {^t X} \in M_r(\mathbb{R}) \right\}.
\end{equation}
Then we have the Harish-Chandra decomposition
\begin{equation}
\mathfrak{sp}(2r)_\mathbb{C} = \mathfrak{u}(r) \oplus \mathfrak{p}_+ \oplus \mathfrak{p}_-
\end{equation}
and an identification
\begin{equation}
T^{0,1}\mathfrak{H}_r \cong Sp_{2r}(\mathbb{R}) \times_{U(r)} \mathfrak{p}_-
\end{equation}
of $Sp_{2r}(\mathbb{R})$-equivariant bundles on $\mathfrak{H}_r$. We need to show that the Cauchy-Riemann equations
\begin{equation}
X_-[\theta((1,g_\tau);\varphi_f)] = 0 \text{ for every } X_-=p_-(X) \in \mathfrak{p}_-
\end{equation}
hold. Assume first that $r=1$. Writing $\theta(\tau,\varphi_f) = y^{-m/4} \theta((1,g_\tau),\varphi_f)$, the operator $X_-:=p_-(1)$ is given by
\begin{equation}
y^{-\frac{1}{2}(\frac{m}{2}-2)} X_- [\theta((1,g_{\tau}),\varphi_f)] = -2i y^2 \tfrac{d}{d\overline{\tau}} [\theta(\tau,\varphi_f)].
\end{equation}
Setting
\begin{equation}
\theta_\psi(\tau;\varphi_f)=y^{-(m/4-1)} \cdot \sum_{v \in V(\mathbb{Q})} \omega(\cdot)\varphi_f(v) \cdot \omega(g_\tau)\psi(v) \in \oplus_{p \geq 0} A^{p,p}(G(\mathbb{Q}) \backslash (G(\mathbb{A}_f) \times \mathbb{D})),
\end{equation}
we find that $2iy^2\tfrac{d}{d\overline{\tau}} \theta(\tau,\varphi_f)=\partial \overline{\partial} \theta_\psi(\tau;\varphi_f)$ by Lemma~\ref{lemma:lowering_varphi}. Thus $\tfrac{d}{d\overline{\tau}} \theta(\tau,\varphi_f)$ is exact and $X_-[\theta(g_\tau;\varphi_f)] = 0$.

For general $r$, since $X$ is symmetric, we can find $k \in SO(r)$ such that $Ad(k) p_-(X) = p_-(Y)$ with $Y$ diagonal. By Proposition~\ref{prop:phi_main_properties}.(e) and \eqref{eq:varphi_wedge}, this reduces to the case $r=1$.

Finally, the holomorphicity at $i\infty$ when $r=1$ follows from the estimates \eqref{eq:varphi_estimate} that show that each $\theta(\tau,\mu+L)$ is of moderate growth with respect to $\tau$.
 \end{proof}

\begin{remarks} \begin{enumerate} \item Let $\Gamma'$ be the inverse image of the congruence subgroup $\Gamma(4|L^\vee/L|) $ of $Sp_{2r}(\mathbb{Z})$ in $Mp_{2r,\mathbb{Z}}$. The restriction of $\rho_L$ to $\Gamma'$ is trivial, and in particular we obtain that
\begin{equation}
[\theta(\cdot, \mu+L^r)] \in M^{(r)}_{m/2}(\Gamma') \otimes H^{2*}(X_L).
\end{equation}
\item For general Hodge weights, it is known that the complex manifold $X_L$ is not algebraic; see \cite[Thm 1.1]{GriffithsRoblesToledo}. One can define the Bott-Chern cohomology groups of a complex manifold $X$ as follows:
\begin{equation}
H_{BC}^{p,q}(X)=\frac{\{ \omega \in A^{p,q}(X) | d\omega = 0\} }{\partial \overline{\partial} A^{p-1,q-1}(X)}.
\end{equation}
For a general (non-K\"ahler) complex manifold, these groups give a strictly finer invariant than the de Rham cohomology groups $H^{2*}(X)$; the author does not know if this is the case for (some of) the complex manifolds $X_L$. In any case, as explained e.g. in \cite[\textsection 4.1]{BismutHLBCC}, Chern classes of holomorphic vector bundles lie naturally in $H^*_{BC}(X)$. Replacing the use of Lemma~\ref{lemma:weight} by \eqref{eq:strong_weight_behaviour} in the proof of the theorem, we find that the Bott-Chern cohomology class $[\theta_{L^r}]_{BC}$ of $\theta_{L^r}$ satisfies
\begin{equation}
[\theta_{L^r}]_{BC} \in M^{(r)}_{m/2,\rho_L} \otimes \oplus_{p \geq 0} H^{p,p}_{BC}(X_L).
\end{equation}

\item One can try to use the theta functions $[\theta_{L^r}]$ to define theta lifts of cusp forms on $Mp_{2r}$ to classes in the coherent cohomology of $X_L$. It would be interesting to understand when such lifts are non-vanishing and study their properties, particularly with respect to cup products.
\end{enumerate}
\end{remarks}

\subsection{The Fourier expansion of $[\theta_{L^r}(\tau)]$} \label{subsection:theta_series_Four_exp} Recall that holomorphic Siegel modular forms $f : \mathfrak{H}_r \to \mathbb{C}$ of level $\Gamma \subset Sp_{2r}(\mathbb{Z})$ have a Fourier expansion
\begin{equation}
f(\tau) = \sum_{T \in Sym_{r}(\mathbb{Q})_{\geq 0}} a_T \cdot q^T, \quad q^T:=e^{2\pi i tr(T\tau)},
\end{equation}
where the sum runs over positive semidefinite matrices $T$ with rational entries. We have
\begin{equation}
a_T \cdot e^{-2\pi tr(Ty)} = \int_{(Sym_r(\mathbb{R}) \cap \Gamma) \backslash Sym_r(\mathbb{R})} f(x+iy) e^{-2\pi i tr(Tx)} dx, \ y \in Sym_r(\mathbb{R})_{>0},
\end{equation}
where $dx$ is the Haar measure on $Sym_r(\mathbb{R})$ giving $(Sym_r(\mathbb{R}) \cap \Gamma) \backslash Sym_r(\mathbb{R})$ unit volume. Here is the result computing the Fourier expansion of $[\theta_{L^r}(\tau)]$; it implies \autoref{thm:thm_intro_1_2} and \eqref{thm:thm_1_intro}.

\begin{theorem} \label{thm:Four_expansion}
\begin{equation} \label{eq:main_theorem}
[\theta_{L^r}(\tau)^*] = Td(\mathcal{F}^\vee)^{-r} \cup \sum_{\mu \in (L^\vee/L)^r} \sum_{T \in Sym_r(\mathbb{Q})_{\geq 0}} Hdg(T,\mu) \cup c^{top}(\mathcal{F}^\vee)^{r-rk(T)} \cdot q^T.
\end{equation} 
\end{theorem}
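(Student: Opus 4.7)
The approach is to make the $x$--Fourier expansion of $\theta(\tau;\mu+L^r)$ explicit via the Weil representation, use the holomorphicity established in Theorem~\ref{thm:modularity_period_domains} to trivialise the $y$--dependence of each coefficient in cohomology, and then apply Proposition~\ref{prop:Thom_property} orbit by orbit. Writing $T(v)_{ij}=\tfrac{1}{2}Q(v_i,v_j)$ and $g_\tau=n(x)m(y^{1/2}) \in P$, the formulas of \textsection\ref{subsection:Weil_representation}, together with the identity $\sum_i Q\bigl((v\cdot y^{1/2})_i,(v\cdot y^{1/2})_i\bigr) = 2\,tr(y\,T(v))$, give
\begin{equation*}
\omega(g_\tau)\varphi(v_1,\ldots,v_r) \;=\; (\det y)^{m/4}\cdot e^{2\pi i\,tr(\tau\,T(v))}\cdot \varphi^0(v\cdot y^{1/2}),
\end{equation*}
so that, after cancelling the prefactor $(\det y)^{-m/4}$ in the definition of $\theta$,
\begin{equation*}
\theta(\tau;\mu+L^r) \;=\; \sum_{T\geq 0} q^T\,\theta_T(y;\mu+L^r),\qquad \theta_T(y;\mu+L^r):=\sum_{v\in L(T,\mu)}\varphi^0(v\cdot y^{1/2}).
\end{equation*}
Holomorphicity of $[\theta(\tau;\mu+L^r)]$ in $\tau$ forces $[\theta_T(y;\mu+L^r)] \in H^{2*}(X_L)$ to be independent of $y$; we evaluate it at $y=t^2 I_r$, where $v\cdot y^{1/2}=tv$.

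Next, pair with a closed compactly supported test form $\eta$ on $X_L$, group $L(T,\mu)$ into the finitely many $\Gamma_L$--orbits, and use the equivariance $\varphi^0(gv)=(g^{-1})^*\varphi^0(v)$ from Proposition~\ref{prop:phi_main_properties}.(b). Unfolding --- whose absolute convergence is supplied by the Schwartz estimate \eqref{eq:varphi_estimate} --- gives
\begin{equation*}
\int_{X_L}\theta_T(t^2 I_r;\mu+L^r)\wedge\eta \;=\; \sum_{[v]\in \Gamma_L\backslash L(T,\mu)}\int_{\Gamma_{L,\langle v\rangle}\backslash \mathbb{D}^+}\varphi^0(tv_1,\ldots,tv_r)\wedge\widetilde{\eta},
\end{equation*}
with $\widetilde{\eta}$ the pullback. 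Since $\langle tv_1,\ldots,tv_r\rangle=\langle v_1,\ldots,v_r\rangle$ for $t\neq 0$, Proposition~\ref{prop:Thom_property} applies to the rescaled tuple; setting $r'=rk(T)=\dim\langle v_1,\ldots,v_r\rangle$, each orbit integral equals
\begin{equation*}
(-2\pi i)^{r'\cdot rk(\mathcal{F})}\int_{\Gamma_{L,\langle v\rangle}\backslash \mathbb{D}^+_{\langle v\rangle}} ch(\wedge\mathcal{F},\nabla)^{r-r'}\wedge Td(\mathcal{F}^\vee,\nabla)^{-r'}\wedge\widetilde{\eta}.
\end{equation*}

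By Proposition~\ref{prop:phi_main_properties}.(d), $ch(\wedge\mathcal{F},\nabla)=c^{top}(\mathcal{F}^\vee,\nabla)\cup Td(\mathcal{F}^\vee,\nabla)^{-1}$, so the Chern--Weil factor regroups as $c^{top}(\mathcal{F}^\vee)^{r-r'}\cup Td(\mathcal{F}^\vee)^{-r}$. Recognising the integral as a cap product with the cycle class $Hdg(v)_L$, and summing over orbits, produces $Hdg(T,\mu)$, so that
\begin{equation*}
[\theta_T(\mu+L^r)] \;=\; (-2\pi i)^{rk(T)\cdot rk(\mathcal{F})}\cdot Td(\mathcal{F}^\vee)^{-r}\cup c^{top}(\mathcal{F}^\vee)^{r-rk(T)}\cup Hdg(T,\mu).
\end{equation*}
Summing over $T\geq 0$ and $\mu\in(L^\vee/L)^r$, and absorbing the factor $(-2\pi i)^{rk(T)\cdot rk(\mathcal{F})}$ into the $^*$--operator (which renormalises the $(p,p)$--piece of a form by a suitable power of $-2\pi i$, precisely so that the Chern--class components arising here take their standard topological normalisations), yields the identity of the theorem.

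The main technical work has already been absorbed into Proposition~\ref{prop:Thom_property}, whose proof rests on Bismut's \cite{BismutInv90} limit theorem for Chern character forms of Koszul complexes via the Mathai--Quillen formula. What remains is essentially bookkeeping: the uniform handling of all positive semidefinite ranks $r'\leq r$ in a single formula --- where the product decomposition $\varphi(v_1,\ldots,v_r)=\varphi(v_1)\wedge\cdots\wedge\varphi(v_r)$ of \eqref{eq:varphi_wedge}, combined with $\varphi^0(0)=ch(\wedge\mathcal{F},\nabla)$, is exactly what produces the $c^{top}(\mathcal{F}^\vee)^{r-rk(T)}$ factor for degenerate $T$ --- and the precise tracking of the $^*$--operator and the powers of $2\pi i$.
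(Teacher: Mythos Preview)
Your proof is correct and follows essentially the same approach as the paper: Fourier-expand via the Weil representation, group $L(T,\mu)$ into finitely many $\Gamma_L$-orbits, unfold, and apply Proposition~\ref{prop:Thom_property} to each orbit integral. The one redundancy is your appeal to holomorphicity (Theorem~\ref{thm:modularity_period_domains}) to force $y$-independence of $[\theta_T]$ before specializing to $y=t^2 I_r$ --- this is unnecessary, since the right-hand side of Proposition~\ref{prop:Thom_property} depends only on the span $U=\langle v_1,\ldots,v_r\rangle$, so it applies directly to $(v_1,\ldots,v_r)\cdot a$ for any $a$ with $a\,{}^t a=y$ and already delivers a $y$-independent answer.
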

\begin{proof}
For $T \in Sym_{r}(\mathbb{Q})_{\geq 0}$ and $y=a {^t a}$, we have
\begin{equation}
\begin{split}
\theta(y,\mu+L^r)_T &:= e^{2\pi tr(Ty)} \int_{(Sym_r(\mathbb{R}) \cap \Gamma) \backslash Sym_r(\mathbb{R})} \theta(x+iy,\mu+L^r) e^{-2\pi i tr(Tx)} dx \\
&= e^{2\pi tr(Ty)} \det(y)^{-m/4} \cdot \sum_{(v_1,\ldots,v_r) \in L(T,\mu)} \omega(m(a))\varphi(v_1,\ldots,v_r) \\
&= \sum_{(v_1,\ldots,v_r) \in L(T,\mu)} \varphi^0((v_1,\ldots,v_r)\cdot a) \\
&= \sum_{(v_1,\ldots,v_r) \in \Gamma_L \backslash L(T,\mu)} \sum_{\gamma \in \Gamma_{L,\langle v_1,\ldots,v_r \rangle} \backslash \Gamma_L} \varphi^0((\gamma^{-1}v_1,\ldots,\gamma^{-1}v_r)\cdot a),
\end{split}
\end{equation}
where the sum over $(v_1,\ldots,v_r) \in \Gamma_L \backslash L(T,\mu)$ is finite. 
Let $\eta$ be a closed and compactly supported form on $X_L$. We find that
\begin{equation}
\int_{X_L} \theta(y,\mu+L^r)^*_T \wedge \eta = \sum_{(v_1,\ldots,v_r) \in \Gamma_L \backslash L(T,\mu)} I((v_1,\ldots,v_r) \cdot a),
\end{equation}
with
\begin{equation}
I(v_1,\ldots,v_r) = \int_{\Gamma_{L,\langle v_1,\ldots,v_r \rangle} \backslash \mathbb{D}^+} \varphi^0(v_1,\ldots,v_r)^* \wedge \eta.
\end{equation}
Note that for $(v_1,\ldots,v_r) \in \Gamma_L \backslash L(T,\mu)$, we have $\dim \langle v_1,\ldots,v_r \rangle=rk(T)$. By Proposition~\ref{prop:Thom_property}, we have
\begin{equation}
I(v_1,\ldots,v_r) = \int_{\Gamma_{L,\langle v_1,\ldots,v_r \rangle} \backslash \mathbb{D}^+_{\langle v_1,\ldots,v_r \rangle}} (c^{top}(\mathcal{F}^\vee,\nabla)^*)^{r-rk(T)} \wedge (Td(\mathcal{F}^\vee,\nabla)^*)^{-r} \wedge \eta. 
\end{equation}
We obtain that
\begin{equation}
[\theta(y,\mu+L^r)^*_T] = Hdg(T,\mu) \cup c^{top}(\mathcal{F}^\vee)^{r-rk(T)} \cup Td(\mathcal{F}^\vee)^{-r} \in H^{2*}(X_L),
\end{equation}
and the theorem follows.
 \end{proof}

\begin{remark} 
When $\mathbb{D}$ is a hermitian symmetric domain of type IV, the cycles $Hdg(T,\mu)$ agree with the special cycles considered in \cite{KudlaMillson3}, and the above theorem implies Theorem 2 in that paper.
\end{remark}

\appendix\normalsize

\section{Superconnections} \label{appendix}

In this appendix we have gathered relevant facts on Quillen's formalism of superconnections and related subjects. For more details and proofs, see \cite{BGV,QuillenChern}.

\subsection{Superconnections and Chern forms}
A super vector space is a complex $\mathbb{Z}/2\mathbb{Z}$-graded vector space; a super algebra is a complex $\mathbb{Z}/2\mathbb{Z}$-graded algebra. The graded tensor product of two super vector spaces (denoted by $\hat{\otimes}$) is again a super vector space; in particular, if $V=V_0 \oplus V_1$ is a super vector space, then $End(V)$ is also a super vector space. We write $\tau$ for the endomorphism of $V$ determined by $\tau(v)=(-1)^{deg(v)}v$. The supertrace $tr_s: End(V) \to \mathbb{C}$ is the linear form defined by
\begin{equation}
tr_s(u)=tr(\tau u),
\end{equation}
where $tr$ denotes the usual trace. If $u=\left(\begin{smallmatrix} a & b \\ c & d \end{smallmatrix}\right)$ with $a \in End(V_0)$, $d \in End(V_1)$, $b \in Hom(V_1,V_0)$ and $c \in Hom(V_0,V_1)$, then $tr_s(u)=tr(a)-tr(d)$; in particular, $tr_s$ vanishes on (super)commutators and on odd endomorphisms. 

Given a super vector bundle $E=E_0 \oplus E_1$ over a differentiable manifold $X$, define a superalgebra $\mathcal{A}$ by
\begin{equation}
\mathcal{A}=A(X,End(E))=A^*(X) \hat{\otimes}_{\mathcal{C}^\infty(X)} \Gamma(End(E)),
\end{equation}
where the tensor product is taken in the sense of superalgebras, that is, we use the Koszul rule of signs to define the product: 
\begin{equation} \label{eq:Koszul_rule_of_signs}
(\omega \otimes u) \cdot (\eta \otimes v) = (-1)^{deg(\eta)deg(u)}(\omega \wedge \eta) \otimes uv, \quad \omega, \eta \in A^*(X), \quad u, v \in \Gamma(End(E)).
\end{equation} 
The supertrace $tr_s: \Gamma(End(E)) \to \mathcal{C}^\infty(X)$ admits a unique extension 
\begin{equation}
tr_s:\mathcal{A} \to A^*(X)
\end{equation}
satisfying $tr_s(\omega \otimes u) = \omega tr_s(u)$. Clearly $\mathcal{A}$ is a left $A^*(X)$-module and $tr_s$ is a map of left $A^*(X)$-modules. Since $tr_s:End(E) \to \mathbb{C}$ vanishes on odd endomorphisms, we conclude that in fact $tr_s:\mathcal{A} \to A^{2*}(X)$.

For a super vector bundle $E$ over $X$, write $A(E) = A^*(X) \hat{\otimes}_{\mathcal{C}^\infty(X)} \Gamma(E)$. A superconnection $\nabla$ on $E$ is an odd element of $End(A(E))$ satisfying the Leibniz rule
\begin{equation}
\nabla(\omega \otimes u) = d\omega \otimes u + (-1)^{deg(\omega)} \omega \wedge \nabla u.
\end{equation}
Note that $A(E)$ is naturally a left $A^*(X)$-module. A direct computation shows that $\nabla^2 \in End(A(E))$ is $A^*(X)$-linear. Quillen \cite{QuillenChern} shows that the natural action of $\mathcal{A}$ on $A(E)$ identifies $\mathcal{A}$ with the $A^*(X)$-linear endomorphisms of $A(E)$; thus we can regard $\nabla^2$ as an element of $\mathcal{A}$ and define the Chern character form
\begin{equation}
ch(E,\nabla) = tr_s(e^{\nabla^2}) \in A^{2*}(X).
\end{equation}
It is clear that this is class is functorial: if $f: X' \to X$ is a map of differentiable manifolds, then 
\begin{equation}
ch(f^*E,f^*\nabla) = f^*(ch(E,\nabla)).
\end{equation}

\label{subsection:App_Chern_form_properties} Denote by $ch(E_0), ch(E_1) \in H^{2*}(X)$ the Chern character of the vector bundles $E_0$ and $E_1$. Let $^*$ be the operator acting by $(-2\pi i)^{-p}$ on $A^{p,p}(X)$. Quillen shows that the form $ch(E,\nabla)$ has the following properties:
\begin{itemize}
\item $ch(E,\nabla)$ is closed.
\item \label{eq:Chern_cohom} $[ch(E,\nabla)^*] = ch(E_0)-ch(E_1)$ (in particular, $[ch(E,\nabla)]$ is independent of $\nabla$).
\item \label{eq:Chern_additive} $ch(E \oplus E', \nabla \oplus \nabla')=ch(E,\nabla) + ch(E',\nabla')$.
\item \label{eq:Chern_product} $ch(E \otimes E', \nabla \otimes 1 + 1 \otimes \nabla') = ch(E, \nabla) \wedge ch(E',\nabla')$.
\end{itemize}

If $\nabla_i$ is a connection on $E_i$ ($i=0,1$) and $u$ is an odd element of $End(E)$, then $\nabla_0+\nabla_1+u$ is a superconnection on $E$; we will only deal with superconnections of this form. Note that if $u=0$, then
\begin{equation} \label{eq:Chern_u_0}
ch(E,\nabla)^* = ch(E_0,\nabla_0)-ch(E_1,\nabla_1),
\end{equation}
where $ch(E_j,\nabla_j)=tr(e^{\frac{i}{2\pi} \nabla_j^2})$, $j=1,2$, are the Chern-Weil forms.

For a smooth family of superconnections $\nabla_t$ on $E$ depending on a real parameter $t$, we have the transgression formula
\begin{equation} \label{eq:transg_formula}
\frac{d}{dt} tr_s(e^{\nabla_t^2}) = d \ tr_s(\frac{d\nabla_t}{dt} e^{\nabla_t^2})
\end{equation}
(\cite[Prop. 1.41]{BGV}).

\subsection{Hermitian holomorphic complexes over complex manifolds}
Some additional properties hold for $ch(E,\nabla)$ when $X$ is a complex manifold and $\nabla$ arises from a complex of holomorphic vector bundles. Namely, consider such a complex
\begin{equation}
0 \to E_0 \overset{v}{\to} \cdots \overset{v}{\to} E_m \to 0
\end{equation}
with $v$ holomorphic. Suppose that each $E_i$ carries a hermitian metric $h_i$, and denote by $v^*:E_{i+1} \to E_i$ the maps adjoint to $v:E_i \to E_{i+1}$. Then $E:=\oplus_{0 \leq i \leq m}E_i$ is a super vector bundle, with even part $\oplus_{i \text{ even}}E_i$ and odd part $\oplus_{i \text{ odd}}E_i$. We endow $E$ with the metric given by the orthogonal sum of the $h_i$ and denote by $\nabla = \nabla^{\text{even}} + \nabla^{\text{odd}}$ the associated Chern connection. Then
\begin{equation}
\nabla_v:=\nabla+i(v+v^*)
\end{equation}
is a superconnection on $E$. Writing $\nabla^{1,0}$ (resp. $\nabla^{0,1}$) for the decomposition of $\nabla$ into holomorphic (resp. antiholomorphic) parts, we find that $(\nabla^{0,1}+iv)^2=0$ since $v$ is holomorphic, and similarly that $(\nabla^{1,0}+iv^*)^2=0$ since $\nabla$ is unitary. This shows that 
\begin{equation}
ch(E,\nabla_v) = tr_s(e^{\nabla_v^2}) \in \oplus_{p \geq 0} A^{p,p}(X) \subset A^{2*}(X).
\end{equation}
Given another complex $0 \to E'_0 \overset{v'}{\to} \cdots \overset{v'}{\to} E'_m \to 0$ of hermitian holomorphic vector bundles with corresponding superconnection $\nabla_{v'}$, consider the total tensor product
\begin{equation}
E''_0 \overset{v''}{\to} \cdots \overset{v''}{\to} E''_{2m} = (E_0 \overset{v}{\to} \cdots \overset{v}{\to} E_m) \otimes (E'_0 \overset{v'}{\to} \cdots \overset{v'}{\to} E'_m).
\end{equation}
Then the super vector bundle $E''=\oplus_{0 \leq i \leq 2m}E''_i$ is the tensor product $E \hat{\otimes} E'$. We have $\nabla_{v''} = \nabla_v \otimes 1 + 1 \otimes \nabla_{v'}$ and hence an identity of Chern forms
\begin{equation} \label{eq:app_tensor_Chern}
ch(E \hat{\otimes} E', \nabla_{v''}) = ch(E,\nabla_{v}) \wedge ch(E',\nabla_{v'}).
\end{equation}

\subsection{Koszul complexes} \label{subsection:Koszul_metric}
Let $E$ be a holomorphic vector bundle on a complex manifold $X$ and let $s: E \to \mathcal{O}_X$ be a holomorphic map. The Koszul complex $K(s)$ associated with $s$ is the complex with underlying vector bundle $\wedge E$ and differential $\wedge^k E \to \wedge^{k-1} E$ defined by
\begin{equation}
d(e_1\wedge \cdots \wedge e_k) = \sum_{1 \leq i \leq k} (-1)^{i+1} s(e_i) \cdot e_1 \wedge \cdots \hat{e_i} \cdots \wedge e_k.
\end{equation}
Given maps $s_i : E \to \mathcal{O}_X$ ($1 \leq i \leq r$), we denote by $K(s_1,\ldots,s_r)$ the Koszul complex associated with the map $(s_1,\ldots,s_r):E^r \to \mathcal{O}_X$; there is an isomorphism
\begin{equation} \label{eq:app_tensor_Koszul} 
K(s_1,\ldots,s_r) \cong \otimes_{1 \leq i \leq r} K(s_i),
\end{equation}
where the right hand side denotes the total tensor product. Given another complex $s':E' \to \mathcal{O}_X$, a map $\alpha: E \to E'$ such that $s=s' \circ \alpha$ induces a morphism $K(\alpha):K(s) \to K(s')$. This morphism is functorial in $\alpha$; in particular, a matrix $(h_{ij}) \in GL_r(\mathbb{C})$ induces an isomorphism
\begin{equation} \label{eq:Koszul_SO_isom}
K(\sum_j h_{1j}s_j, \ldots, \sum_j h_{rj} s_j) \cong K(s_1,\ldots,s_r).
\end{equation} 
A hermitian metric on $E$ induces a metric on $K(s)$: different $\wedge^k E$ are orthogonal and an orthonormal basis of $\wedge^k E$ is given by all elements $e_{i_1} \wedge \ldots \wedge e_{i_k}$, where $\{e_1,\ldots,e_{rk(E)}\}$ is an orthonormal basis of $E$. If $E$ is hermitian, then the above isomorphism is an isometry if $(h_{ij}) \in SU(r)$.

\bibliographystyle{amsplain}
\bibliography{refs} 

\providecommand{\bysame}{\leavevmode\hbox to3em{\hrulefill}\thinspace}
\providecommand{\MR}{\relax\ifhmode\unskip\space\fi MR }
\providecommand{\MRhref}[2]{%
  \href{http://www.ams.org/mathscinet-getitem?mr=#1}{#2}
}
\providecommand{\href}[2]{#2}
\begin{thebibliography}{10}

\bibitem{BergeronMillsonMoeglin}
Nicolas Bergeron, John Millson, and Colette Moeglin, \emph{{T}he {H}odge
  {C}onjecture and {A}rithmetic {Q}uotients of {C}omplex {B}alls}, to appear in
  Acta Mathematica.

\bibitem{BGV}
Nicole Berline, Ezra Getzler, and Mich{\`e}le Vergne, \emph{Heat kernels and
  {D}irac operators}, Grundlehren der Mathematischen Wissenschaften
  [Fundamental Principles of Mathematical Sciences], vol. 298, Springer-Verlag,
  Berlin, 1992. \MR{1215720 (94e:58130)}

\bibitem{BismutGilletSoule1}
J.-M. Bismut, H.~Gillet, and C.~Soul{\'e}, \emph{Analytic torsion and
  holomorphic determinant bundles. {I}. {B}ott-{C}hern forms and analytic
  torsion}, Comm. Math. Phys. \textbf{115} (1988), no.~1, 49--78. \MR{929146
  (89g:58192a)}

\bibitem{BismutInv90}
Jean-Michel Bismut, \emph{Superconnection currents and complex immersions},
  Invent. Math. \textbf{99} (1990), no.~1, 59--113. \MR{1029391}

\bibitem{BismutHLBCC}
\bysame, \emph{Hypoelliptic {L}aplacian and {B}ott-{C}hern cohomology},
  Progress in Mathematics, vol. 305, Birkh\"auser/Springer, Cham, 2013, A
  theorem of Riemann-Roch-Grothendieck in complex geometry. \MR{3099098}

\bibitem{BruinierFunke}
Jan~Hendrik Bruinier and Jens Funke, \emph{On two geometric theta lifts}, Duke
  Math. J. \textbf{125} (2004), no.~1, 45--90. \MR{2097357 (2005m:11089)}

\bibitem{FaltingsLecturesARR}
Gerd Faltings, \emph{Lectures on the arithmetic {R}iemann-{R}och theorem},
  Annals of Mathematics Studies, vol. 127, Princeton University Press,
  Princeton, NJ, 1992, Notes taken by Shouwu Zhang. \MR{1158661}

\bibitem{GreenGriffithsKerrBook}
Mark Green, Phillip Griffiths, and Matt Kerr, \emph{Mumford-{T}ate groups and
  domains}, Annals of Mathematics Studies, vol. 183, Princeton University
  Press, Princeton, NJ, 2012, Their geometry and arithmetic. \MR{2918237}

\bibitem{GriffithsRoblesToledo}
Phillip Griffiths, Colleen Robles, and Domingo Toledo, \emph{Quotients of
  non-classical flag domains are not algebraic}, Algebr. Geom. \textbf{1}
  (2014), no.~1, 1--13. \MR{3234111}

\bibitem{GriffithsSchmid}
Phillip Griffiths and Wilfried Schmid, \emph{Locally homogeneous complex
  manifolds}, Acta Math. \textbf{123} (1969), 253--302. \MR{0259958}

\bibitem{KudlaSplitting}
Stephen~S. Kudla, \emph{Splitting metaplectic covers of dual reductive pairs},
  Israel J. Math. \textbf{87} (1994), no.~1-3, 361--401. \MR{1286835}

\bibitem{KudlaOrthogonal}
\bysame, \emph{Algebraic cycles on {S}himura varieties of orthogonal type},
  Duke Math. J. \textbf{86} (1997), no.~1, 39--78. \MR{1427845 (98e:11058)}

\bibitem{KudlaCD}
\bysame, \emph{Central derivatives of {E}isenstein series and height pairings},
  Ann. of Math. (2) \textbf{146} (1997), no.~3, 545--646. \MR{1491448
  (99j:11047)}

\bibitem{KudlaMillson1}
Stephen~S. Kudla and John~J. Millson, \emph{The theta correspondence and
  harmonic forms. {I}}, Math. Ann. \textbf{274} (1986), no.~3, 353--378.
  \MR{842618 (88b:11023)}

\bibitem{KudlaMillson2}
\bysame, \emph{The theta correspondence and harmonic forms. {II}}, Math. Ann.
  \textbf{277} (1987), no.~2, 267--314. \MR{886423 (89b:11041)}

\bibitem{KudlaMillson3}
\bysame, \emph{Intersection numbers of cycles on locally symmetric spaces and
  {F}ourier coefficients of holomorphic modular forms in several complex
  variables}, Inst. Hautes \'Etudes Sci. Publ. Math. (1990), no.~71, 121--172.
  \MR{1079646 (92e:11035)}

\bibitem{QuillenChern}
Daniel Quillen, \emph{Superconnections and the {C}hern character}, Topology
  \textbf{24} (1985), no.~1, 89--95. \MR{790678 (86m:58010)}

\bibitem{SouleBook}
C.~Soul{\'e}, \emph{Lectures on {A}rakelov geometry}, Cambridge Studies in
  Advanced Mathematics, vol.~33, Cambridge University Press, Cambridge, 1992,
  With the collaboration of D. Abramovich, J.-F. Burnol and J. Kramer.
  \MR{1208731 (94e:14031)}

\end{thebibliography}

\vspace{1cm}

\author{\noindent Department of Mathematics \\
  University of Toronto \\
  40 St. George Street, BA 6290 \\ 
  Toronto, ON M5S 2E4, Canada \\
  e-mail: \texttt{lgarcia@math.toronto.edu}}

\end{document}